\numberwithin{equation}{section}    
\definecolor{dblue}{HTML}{0455BF}
\definecolor{dgreen}{HTML}{02724A}
\definecolor{dgreen2}{HTML}{025951}
\definecolor{dred}{HTML}{D90404}
\definecolor{dviolet}{HTML}{42208C}
\definecolor{labelkey}{HTML}{025951}
\definecolor{refkey}{HTML}{025951}
\setlist{itemsep=-2.0pt}
\g@addto@macro\th@plain{
\thm@headfont{\bfseries\sffamily}
\thm@notefont{}}
\g@addto@macro\th@definition{
\thm@headfont{\bfseries\sffamily}
\thm@notefont{}}
\g@addto@macro\th@remark{
\thm@headfont{\bfseries\sffamily}
\thm@notefont{}}
\theoremstyle{plain}
\newtheorem{theorem}{Th\'eor\`eme}[section]
\newtheorem{proposition}[theorem]{Proposition}
\newtheorem{corollary}[theorem]{Corollaire}
\newtheorem{lemma}[theorem]{Lemme}
\theoremstyle{definition}
\newtheorem{definition}[theorem]{D\'efinition}
\newtheorem{example}[theorem]{Exemple}
\newtheorem{problem}[theorem]{Probl\`eme}
\theoremstyle{remark}
\newtheorem{remark}[theorem]{Remarque}
\DeclareMathDelimiterSet{\scal}[2]{
\selectdelim[l]<{#1}
\mathpunct{}\selectdelim[p]|
{#2}\selectdelim[r]>}
\newcommand{\menge}[2]{\bigl\{{#1}\mid{#2}\bigr\}} 
\DeclareMathDelimiterSet{\Menge}[2]{\selectdelim[l]\{
{#1}\selectdelim[m]|{#2}\selectdelim[r]\}}
\def\upintkern@{\mkern-7mu\mathchoice{\mkern-3.5mu}{}{}{}}
\def\upintdots@{\mathchoice{\mkern-4mu\@cdots\mkern-4mu}%
{{\cdotp}\mkern1.5mu{\cdotp}\mkern1.5mu{\cdotp}}%
{{\cdotp}\mkern1mu{\cdotp}\mkern1mu{\cdotp}}%
{{\cdotp}\mkern1mu{\cdotp}\mkern1mu{\cdotp}}}
\DeclareFontFamily{OMX}{mdbch}{}
\DeclareFontShape{OMX}{mdbch}{m}{n}{ <->s * [0.8]  mdbchr7v }{}
\DeclareFontShape{OMX}{mdbch}{b}{n}{ <->s * [0.8]  mdbchb7v }{}
\DeclareFontShape{OMX}{mdbch}{bx}{n}{<->ssub * mdbch/b/n}{}
\DeclareSymbolFont{uplargesymbols}{OMX}{mdbch}{m}{n}
\DeclareMathSymbol{\upintop}{\mathop}{uplargesymbols}{82}
\DeclareMathSymbol{\upointop}{\mathop}{uplargesymbols}{"48}
\renewcommand{\int}{\DOTSI\upintop\ilimits@}
\renewcommand{\oint}{\DOTSI\upointop\ilimits@}
\newcommand{\RR}{\mathbb{R}}
\newcommand{\NN}{\mathbb{N}}
\newcommand{\HH}{\mathcal{H}}
\newcommand{\HHH}{\ensuremath{\boldsymbol{{\mathsf{H}}}}}
\newcommand{\GG}{\mathcal{G}}
\newcommand{\pinf}{{+}\infty}
\newcommand{\minf}{{-}\infty}
\newcommand{\RX}{\intv[l]0{\minf}{\pinf}}
\newcommand{\RPP}{\intv[o]0{0}{\pinf}}
\newcommand{\emp}{\varnothing}
\newcommand{\minimize}[2]{\underset{\substack{{#1}}}
{\operatorname{minimiser}}\;\;#2}
\newcommand{\pushfwd}%
{\ensuremath{\mbox{\Large$\,\triangleright\,$}}}
\DeclareMathOperator{\Argmin}{Argmin}
\newcommand{\Id}{\mathrm{Id}}
\newcommand{\moyo}[2]{\leftindex[I]^{#2}{#1}}
\DeclareMathOperator{\dom}{dom}
\DeclareMathOperator{\prox}{prox}
\DeclareMathOperator{\soft}{doux}
\DeclareMathOperator{\proj}{proj}
\DeclareFontFamily{U}{mathb}{}
\DeclareFontShape{U}{mathb}{m}{n}{<-5.5> mathb5 <5.5-6.5> mathb6 
<6.5-7.5> mathb7 <7.5-8.5> mathb8 <8.5-9.5> mathb9 <9.5-11> mathb10
<11-> mathb12}{}
\DeclareSymbolFont{mathb}{U}{mathb}{m}{n}
\DeclareMathSymbol{\blackdiamond}{\mathbin}{mathb}{"0C}
\renewcommand{\leq}{\leqslant}
\renewcommand{\geq}{\geqslant}
\newenvironment{resume}{%
\vspace*{-0.50cm}
\small
\quotation%
\noindent%
{\normalfont\bfseries\sffamily
\nobreak{R\'esum\'e.}\ }%
}{%
\endquotation%
\medskip
}
\renewenvironment{abstract}{%
\vspace*{-0.50cm}
\small
\quotation%
\noindent%
{\normalfont\bfseries\sffamily
\nobreak\abstractname\ }%
}{%
\endquotation%
\medskip
}
\renewcommand{\abstractname}{Abstract.}
\newcommand\keywordsname{Keywords.}
\newcommand{\email}[1]{\href{mailto:#1}{\nolinkurl{#1}}}
\renewcommand*\Affilfont{\normalfont\normalsize}
\newcommand\affilcr{\protect\\ \protect\Affilfont}
\renewcommand\AB@affilsepx{\protect\\[0.5em]}
\author[1]{Patrick L. Combettes}
\affil[1]{North Carolina State University
\affilcr
Department of Mathematics
\affilcr
Raleigh, NC 27695, USA
\affilcr
\email{plc@math.ncsu.edu}
}
\begin{document}

\title{%
The Proximal Gradient Method\thanks{%
P. L. Combettes.
Courriel : \email{plc@math.ncsu.edu}.
T\'el\'ephone : +1 919 515 2671.
Ce travail a \'et\'e subventionn\'e par le contrat
CCF-2211123 de la National Science Foundation.
}}

\date{~}

\maketitle

\vskip -15mm

\begin{abstract}
The proximal gradient method is a splitting algorithm for the
minimization of the sum of two convex functions, one of which is
smooth. It has applications in areas such as mechanics,
inverse problems, machine learning, image reconstruction,
variational inequalities, statistics, operations research, and
optimal transportation. Its formalism encompasses a wide variety 
of numerical methods in optimization such as gradient descent,
projected gradient, iterative thresholding, alternating
projections, the constrained Landweber method, as well as various
algorithms in statistics and sparse data analysis. This
paper aims at providing an account of the main properties of the
proximal gradient method and to discuss some of its applications.
\end{abstract}

{%
\begin{center}
{\huge\sffamily\bfseries La M\'ethode du Gradient Proxim\'e}\\[6mm]
{\scshape\Large Patrick L. Combettes}\\[3mm]
North Carolina State University\\
Department of Mathematics\\
Raleigh, NC 27695, USA\\
Courriel : \email{plc@math.ncsu.edu}.
\end{center}

\begin{resume}
La m\'ethode du gradient proxim\'e est un algorithme d'\'eclatement
pour la minimisation de la somme de deux fonctions convexes, dont
l'une est lisse. Elle trouve des applications dans des domaines
tels que la m\'ecanique, le traitement du signal, les probl\`emes
inverses, l'apprentissage automatique, la reconstruction d'images,
les in\'equations variationnelles, les statistiques, la recherche
op\'erationnelle et le transport optimal. Son formalisme englobe
une grande vari\'et\'e de m\'ethodes num\'eriques en optimisation,
telles que la descente de gradient, le gradient projet\'e, la
m\'ethode de seuillage it\'eratif, la m\'ethode des projections
altern\'ees, la m\'ethode de Landweber contrainte, ainsi que
divers algorithmes en statistique et en analyse parcimonieuse de
donn\'ees. Cette synth\`ese vise \`a donner un aper\c{c}u des
principales propri\'et\'es de la m\'ethode du gradient proxim\'e et
d'aborder certaines de ses applications. 
\end{resume}

\vspace{12mm}
\newpage

\section{Introduction}

\noindent
{\bfseries Notations.}
$\HH$, $\GG$ et $\GG_k$ d\'esignent des espaces euclidiens, \`a
savoir des espaces hilbertiens r\'eels de dimension finie. On
note $\scal{\cdot}{\cdot}$ leur produit scalaire et $\|\cdot\|$ la
norme associ\'ee. Une fonction $f\colon\HH\to\RX$ est \emph{propre}
si $\dom f=\menge{x\in\HH}{f(x)<\pinf}\neq\emp$. La classe des 
fonctions semicontinues inf\'erieurement, convexes et propres de 
$\HH$ dans $\RX$ se note $\Gamma_0(\HH)$. Enfin, 
$\text{ir}\,C$ d\'esigne l'int\'erieur relatif d'une partie convexe 
$C\subset\HH$, \emph{i.e.}, son int\'erieur en tant que
sous-ensemble du plus petit espace affine qui le contient. 

Le th\`eme central de cet article est le probl\`eme de minimisation
convexe suivant, qui sous-tend une multitude de formulations
variationnelles en math\'ematiques appliqu\'ees et dans les
sciences de l'ing\'enieur.

\begin{problem}
\label{prob:1}
Soient $\beta\in\RPP$, $f\in\Gamma_0(\HH)$ et $g\colon\HH\to\RR$
une fonction convexe diff\'erentiable dont le gradient $\nabla g$
est $\beta$-lipschitzien. L'objectif est de 
\begin{equation}
\label{e:prob1}
\minimize{x\in\HH}{f(x)+g(x)}
\end{equation}
sous l'hypoth\`ese que l'ensemble des solutions $\Argmin(f+g)$ est
non vide.
\end{problem}

La m\'ethode du gradient proxim\'e s'inscrit dans la classe des
m\'ethodes d'\'eclatement, qui visent \`a d\'ecomposer un
probl\`eme en composantes \'el\'ementaires faciles \`a activer
individuellement dans un algorithme \cite{Acnu24,Lion70}. Dans le
cas du Probl\`eme~\ref{prob:1}, il s'agit d'activer la fonction
$f$ via son op\'erateur de proximit\'e et la fonction $g$, qui est
lisse, via son gradient. Ainsi, la m\'ethode du \emph{gradient
proxim\'e} alterne un pas de gradient sur $g$ et un pas proximal
sur $f$.

On fournit dans la Section~\ref{sec:2} quelques r\'esultats
essentiels sur les fonctions convexes. La m\'ethode du
gradient proxim\'e est d\'ecrite dans la Section~\ref{sec:3}, qui
couvre \'egalement ses propri\'et\'es asymptotiques essentielles.
Diverses applications de la m\'ethode sont d\'ecrites dans la
Section~\ref{sec:4}. La Section~\ref{sec:5} sur la version duale de
la m\'ethode du gradient proxim\'e et la Section~\ref{sec:6} sur sa
version multivari\'ee, ouvrent de nouveaux champs d'application.
On r\'ecapitule dans la Section~\ref{sec:7} les points principaux
de cette synth\`ese. 

\section{Cadre math\'ematique}
\label{sec:2}

Les r\'esultats suivants concernent l'op\'erateur de proximit\'e de
Moreau \cite{Mor62b,More65}, un outil essentiel en analyse convexe
non lisse.

\begin{lemma}
\label{l:1}
Soient $f\in\Gamma_0(\HH)$ et $x\in\HH$. Posons
\begin{equation}
\psi\colon\HH\to\RX\colon y\mapsto f(y)+\dfrac{1}{2}\|x-y\|^2.
\end{equation}
Alors $\psi$ admet un minimiseur unique not\'e $\prox_fx$ et
appel\'e le \emph{point proximal} de $x$ relativement \`a $f$. Il
est caract\'eris\'e comme suit :
\begin{equation}
\label{e:p07}
(\forall p\in\HH)\quad\bigl[\;p=\prox_fx\;\;\Leftrightarrow\;\;
(\forall y\in\HH)\quad\scal{y-p}{x-p}+f(p)\leq f(y)\;\bigr].
\end{equation}
On appelle $\prox_f$ l'\emph{op\'erateur de proximit\'e} de $f$.
\end{lemma}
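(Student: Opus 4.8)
The plan is to establish existence of a minimizer of $\psi$, then the characterization~\eqref{e:p07}, and finally uniqueness, the last two flowing from a single algebraic identity; throughout I would work directly with $\psi$ and avoid subdifferential calculus, so the argument stays self-contained. I would settle existence by compactness. Because $f\in\Gamma_0(\HH)$, it admits a continuous affine minorant: there are $u\in\HH$ and $\eta\in\RR$ such that $f(y)\geq\scal{u}{y}+\eta$ for every $y\in\HH$. Consequently
\begin{equation}
(\forall y\in\HH)\quad\psi(y)\geq\scal{u}{y}+\eta+\frac{1}{2}\|x-y\|^2,
\end{equation}
and the right-hand side tends to $\pinf$ as $\|y\|\to\pinf$; hence $\psi$ is coercive. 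Since $\psi$ is moreover proper, its domain being $\dom f\neq\emp$, and lower semicontinuous as the sum of the lower semicontinuous $f$ and a continuous function, its sublevel sets are closed and bounded, thus compact in the Euclidean space $\HH$. A lower semicontinuous function attains its infimum on such a set, so $\psi$ has at least one minimizer.

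The heart of the matter is the characterization, which I would obtain from an exact expansion. For all $p,y\in\HH$, the identity $\|x-y\|^2=\|x-p\|^2-2\scal{x-p}{y-p}+\|y-p\|^2$ gives
\begin{equation}
\label{e:id1}
\psi(y)-\psi(p)=\Bigl(f(y)-f(p)-\scal{x-p}{y-p}\Bigr)+\frac{1}{2}\|y-p\|^2.
\end{equation}
Suppose first that $p$ satisfies the inequality in~\eqref{e:p07}, that is, $f(y)-f(p)-\scal{x-p}{y-p}\geq 0$ for every $y\in\HH$; then~\eqref{e:id1} yields $\psi(y)-\psi(p)\geq\tfrac{1}{2}\|y-p\|^2$, which shows simultaneously that $p$ minimizes $\psi$ and that it is the \emph{only} minimizer, the inequality being strict for $y\neq p$. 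Conversely, suppose $p$ minimizes $\psi$, fix $y\in\dom f$ (the inequality in~\eqref{e:p07} being trivial when $f(y)=\pinf$), and test $\psi$ along the segment $y_t=p+t(y-p)$ for $t\in\zeroun$. Convexity of $f$ gives $f(y_t)-f(p)\leq t\bigl(f(y)-f(p)\bigr)$, and inserting the exact value of the quadratic part produces $0\leq\psi(y_t)-\psi(p)\leq t\bigl(f(y)-f(p)-\scal{x-p}{y-p}\bigr)+\tfrac{t^2}{2}\|y-p\|^2$. Dividing by $t>0$ and letting $t\downarrow 0$ leaves $f(y)-f(p)-\scal{x-p}{y-p}\geq 0$, which is precisely the inequality in~\eqref{e:p07}.

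Assembling the three parts finishes the proof: a minimizer exists by the first step and satisfies~\eqref{e:p07} by the forward implication, while the reverse implication shows that any point satisfying~\eqref{e:p07} is the unique minimizer. Thus $\psi$ has exactly one minimizer, which may legitimately be written $\prox_fx$, and~\eqref{e:p07} characterizes it. I anticipate no genuine obstacle: the sole ingredient external to the elementary estimate is the existence of an affine minorant of a function in $\Gamma_0(\HH)$, and the only point demanding care is to route both the equivalence and the uniqueness through the single identity~\eqref{e:id1}.
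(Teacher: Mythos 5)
Your proof is correct, but it is genuinely different in nature from what the paper does: the paper does not prove Lemma~\ref{l:1} at all, it delegates existence and uniqueness to Moreau (\emph{Proximit\'e et dualit\'e dans un espace hilbertien}, Proposition~3.a) and the characterization \eqref{e:p07} to Bauschke--Combettes (Proposition~12.26). You instead give a self-contained argument: existence by a Weierstrass-type argument (affine minorant of $f\in\Gamma_0(\HH)$, hence coercivity of $\psi$, hence nonempty compact sublevel sets in the Euclidean space $\HH$), and then both directions of \eqref{e:p07} \emph{and} uniqueness routed through the exact expansion $\psi(y)-\psi(p)=\bigl(f(y)-f(p)-\scal{x-p}{y-p}\bigr)+\tfrac{1}{2}\|y-p\|^2$, which is an explicit form of the strong convexity of $\psi$; the converse direction via the segment $y_t=p+t(y-p)$, convexity of $f$, and $t\downarrow 0$ is the classical variational-inequality argument. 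What each approach buys: the paper's citation covers the general Hilbert-space case (where your compactness step would have to be replaced by weak compactness and weak lower semicontinuity), while your argument makes the lemma entirely elementary in the finite-dimensional setting of the paper and exhibits the mechanism --- the $\tfrac{1}{2}\|y-p\|^2$ surplus --- that simultaneously yields minimality and uniqueness. Two small points to make explicit: in the converse direction you should note that $p\in\dom f$ (which follows since $\psi(p)\leq\psi(y_0)<\pinf$ for any $y_0\in\dom f$), so that $f(p)$ is finite and may be subtracted; and the affine-minorant fact you invoke, while standard, is itself a nontrivial consequence of separation, so strictly speaking your proof is self-contained only modulo that one external ingredient.
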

\begin{proof}
L'existence et l'unicit\'e de $\prox_fx$ est \'etablie dans
\cite[Proposition~3.a]{More65} et la caract\'erisation dans 
\cite[Proposition~12.26]{Livre1}.
\end{proof}

\begin{example}[projection sur un convexe]
\label{ex:10}
Soient $C$ une partie convexe ferm\'ee non vide de $\HH$, 
$\iota_C$ la \emph{fonction indicatrice} de $C$, \emph{i.e.},
\begin{equation}
\label{e:iota}
(\forall y\in\HH)\quad\iota_C(y)=
\begin{cases}
0,&\text{si}\;\;y\in C;\\
\pinf,&\text{si}\;\;y\notin C,
\end{cases}
\end{equation}
et $x\in\HH$. Alors $\iota_C\in\Gamma_0(\HH)$ et nous d\'eduisons
du Lemme~\ref{l:1} qu'il existe un unique point dans $C$, not\'e
$\proj_Cx$ et appel\'e la \emph{projection} de $x$ sur $C$, qui
minimise la fonction $\delta\colon y\mapsto\|x-y\|$ sur $C$. 
Ce point est caract\'eris\'e comme suit~:
\begin{equation}
\label{e:m08}
(\forall p\in\HH)\quad\Biggl[\;p=\proj_Cx\;\;\Leftrightarrow\;\;
\begin{cases}
p\in C\\
(\forall y\in C)\quad\scal{y-p}{x-p}\leq 0.
\end{cases}
\Biggr]
\end{equation}
On appelle $\prox_{\iota_C}=\proj_C$ 
l'\emph{op\'erateur de projection} sur $C$.
\end{example}

\begin{lemma}[\protect{\cite[Proposition~12.28]{Livre1}}]
\label{l:2}
Soit $f\in\Gamma_0(\HH)$. Alors $\prox_f$ est une contraction
ferme, \emph{i.e.},
\begin{equation}
\label{e:f}
(\forall x\in\HH)(\forall y\in\HH)\quad
\|\prox_fx-\prox_fy\|^2+
\|(x-\prox_fx)-(y-\prox_fy)\|^2\leq\|x-y\|^2,
\end{equation}
et donc un op\'erateur $1$-lipschitzien.
\end{lemma}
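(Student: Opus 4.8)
The plan is to exploit the variational characterization \eqref{e:p07} of the proximal point established in Lemme~\ref{l:1}, applied at both $x$ and $y$. I would begin by setting $p=\prox_fx$ and $q=\prox_fy$; note that Lemme~\ref{l:1} guarantees in particular that $p,q\in\dom f$, so that $f(p)$ and $f(q)$ are finite, a point that will matter below.

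First I would invoke \eqref{e:p07} for $p=\prox_fx$, choosing the test vector $y=q$, to get $\scal{q-p}{x-p}+f(p)\le f(q)$. Symmetrically, applying \eqref{e:p07} for $q=\prox_fy$ with the test vector $y=p$ yields $\scal{p-q}{y-q}+f(q)\le f(p)$. Adding these two inequalities, the finite terms $f(p)$ and $f(q)$ cancel on both sides, and after using $\scal{q-p}{x-p}=-\scal{p-q}{x-p}$ and collecting terms I expect to arrive at the monotonicity-type estimate
\begin{equation*}
\scal{p-q}{(x-p)-(y-q)}\ge 0.
\end{equation*}
This inequality is the engine of the whole argument.

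Next I would introduce the two vectors $u=p-q=\prox_fx-\prox_fy$ and $v=(x-p)-(y-q)=(x-\prox_fx)-(y-\prox_fy)$, and record the key algebraic identity $u+v=x-y$. Expanding the squared norm as $\|x-y\|^2=\|u+v\|^2=\|u\|^2+2\scal{u}{v}+\|v\|^2$ and discarding the cross term $2\scal{u}{v}$, which is nonnegative by the estimate just obtained, gives precisely the firm-nonexpansiveness inequality \eqref{e:f}. The $1$-lipschitzianity then follows at once by dropping the nonnegative summand $\|(x-\prox_fx)-(y-\prox_fy)\|^2$ from \eqref{e:f}, leaving $\|\prox_fx-\prox_fy\|\le\|x-y\|$.

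I do not anticipate a deep obstacle here: the proof is a two-fold application of the characterization followed by one expansion of a norm. The only genuinely delicate points are bookkeeping ones, namely applying \eqref{e:p07} with the correct \emph{cross} test vectors ($q$ for the inequality at $x$, and $p$ for the inequality at $y$) and tracking signs carefully so that the cross term emerges with the right orientation; the one substantive hypothesis actually used is the finiteness of $f$ at the proximal points, which legitimizes the cancellation of $f(p)$ and $f(q)$ and is supplied by Lemme~\ref{l:1}.
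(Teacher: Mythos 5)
Your proof is correct. Note, however, that the paper itself does not prove this lemma: it is stated with a bare citation to \cite[Proposition~12.28]{Livre1}, so there is no in-text argument to compare against. What you have done is reconstruct, from Lemme~\ref{l:1} alone, exactly the classical argument that the cited reference contains: apply the characterization \eqref{e:p07} at $x$ with test vector $q=\prox_fy$ and at $y$ with test vector $p=\prox_fx$, add, cancel the finite values $f(p)$ and $f(q)$ to obtain $\scal{p-q}{(x-p)-(y-q)}\geq 0$, then expand $\|x-y\|^2=\|u+v\|^2$ with $u=p-q$ and $v=(x-p)-(y-q)$ and discard the nonnegative cross term to get \eqref{e:f}; the $1$-lipschitzianity follows by dropping $\|v\|^2$. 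Your bookkeeping is accurate (the cross test vectors are chosen correctly, the sign of the cross term comes out right), and your attention to the finiteness of $f(p)$ and $f(q)$ is exactly the point that legitimizes the cancellation, since $f$ takes values in $\RX$ and is proper, so the proximal points lie in $\dom f$. The net effect is that your version makes the paper self-contained where the paper chose to delegate: everything you use is already established in Lemme~\ref{l:1}, so your proof could be inserted verbatim after the lemma without importing anything beyond the reference already needed for \eqref{e:p07}.
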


Rappelons enfin quelques propri\'et\'es des fonctions convexes
diff\'erentiables. Tout d'abord, on dit qu'une fonction convexe
$g\colon\HH\to\RR$ est \emph{diff\'erentiable} en $x\in\HH$ s'il
existe un vecteur $\nabla g(x)\in\HH$, appel\'e le \emph{gradient}
de $g$ en $x\in\HH$, tel que 
\begin{equation}
(\forall y\in\HH)\quad\lim_{0<\alpha\downarrow 0}
\dfrac{g(x+\alpha y)-g(x)}{\alpha}=\scal{y}{\nabla g(x)}.
\end{equation}

\begin{lemma}
\label{l:5}
Soient $\beta\in\RPP$, $g\colon\HH\to\RR$ une fonction convexe
diff\'erentiable, $x\in\HH$, $y\in\HH$ et $z\in\HH$. Alors les
propri\'et\'es suivantes sont v\'erifi\'ees :
\begin{enumerate}
\item
\label{l:5i}
$g(x)\leq g(y)-\scal{y-x}{\nabla g(x)}$.
\item
\label{l:5ii}
Supposons que $\nabla g$ soit $\beta$-lipschitzien. Alors les
in\'egalit\'es suivantes ont lieu :
\begin{enumerate}
\item
\label{l:5iia}
$g(z)-\scal{z-x}{\nabla g(x)}-(\beta/2)\|z-x\|^2\leq g(x)$.
\item
\label{l:5iib}
$\scal{x-y}{\nabla g(x)-\nabla g(y)}\geq
\|\nabla g(x)-\nabla g(y)\|^2/\beta$.
\item
\label{l:5iic}
$g(z)\leq g(y)+\scal{z-y}{\nabla g(x)}+(\beta/2)\|x-z\|^2$.
\end{enumerate}
\end{enumerate}
\end{lemma}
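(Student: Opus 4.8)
The plan is to establish the three assertions in order, using \ref{l:5i} and the descent estimate \ref{l:5iia} as the engines for everything else. For \ref{l:5i}, the gradient inequality, I would restrict $g$ to the segment joining $x$ to $y$: convexity gives $g(x+t(y-x))\leq(1-t)g(x)+tg(y)$ for every $t\in\zeroun$, whence $\bigl(g(x+t(y-x))-g(x)\bigr)/t\leq g(y)-g(x)$; letting $t\downarrow 0$ and identifying the limit of the left-hand side with the directional derivative $\scal{y-x}{\nabla g(x)}$ gives the claim. Convexity is the only hypothesis used here.

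For \ref{l:5iia}, the so-called descent lemma, I would integrate along the segment $[x,z]$. The fundamental theorem of calculus applied to $t\mapsto g(x+t(z-x))$ yields $g(z)-g(x)=\int_0^1\scal{z-x}{\nabla g(x+t(z-x))}\,dt$; subtracting $\scal{z-x}{\nabla g(x)}$ leaves the remainder $\int_0^1\scal{z-x}{\nabla g(x+t(z-x))-\nabla g(x)}\,dt$, whose integrand is at most $\beta t\|z-x\|^2$ by Cauchy--Schwarz and the $\beta$-Lipschitz property. Integrating the bound produces $(\beta/2)\|z-x\|^2$, which is \ref{l:5iia}. Here only the Lipschitz hypothesis intervenes, not convexity.

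The crux is \ref{l:5iib}, the cocoercivity of $\nabla g$, \emph{i.e.}, the Baillon--Haddad inequality. The decisive device is to freeze $x$ and introduce the convex function $\phi\colon u\mapsto g(u)-\scal{u}{\nabla g(x)}$, whose gradient $\nabla\phi=\nabla g-\nabla g(x)$ is again $\beta$-Lipschitz and vanishes at $x$, so that $x$ minimizes $\phi$. Applying \ref{l:5iia} to $\phi$ at the point $u-\beta^{-1}\nabla\phi(u)$ and then using $\phi(x)\leq\phi\bigl(u-\beta^{-1}\nabla\phi(u)\bigr)$ sharpens the descent estimate into $\phi(u)-\phi(x)\geq(2\beta)^{-1}\|\nabla g(u)-\nabla g(x)\|^2$, that is, $g(u)-g(x)-\scal{u-x}{\nabla g(x)}\geq(2\beta)^{-1}\|\nabla g(u)-\nabla g(x)\|^2$. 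Writing this with $u=y$, writing the symmetric inequality with the roles of $x$ and $y$ exchanged, and adding the two causes the function values to cancel and the linear terms to collapse to $\scal{x-y}{\nabla g(x)-\nabla g(y)}$, yielding \ref{l:5iib}. I expect this to be the main obstacle, the subtlety being the choice of the evaluation point $u-\beta^{-1}\nabla\phi(u)$, which is precisely the one minimizing the quadratic upper model of $\phi$ supplied by the descent lemma.

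Finally, \ref{l:5iic} follows by simply chaining the two earlier bounds: \ref{l:5iia} gives $g(z)\leq g(x)+\scal{z-x}{\nabla g(x)}+(\beta/2)\|x-z\|^2$, while \ref{l:5i} gives $g(x)\leq g(y)+\scal{x-y}{\nabla g(x)}$; adding them and regrouping the inner products via $\scal{x-y}{\nabla g(x)}+\scal{z-x}{\nabla g(x)}=\scal{z-y}{\nabla g(x)}$ delivers the assertion.
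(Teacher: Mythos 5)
Your proposal is correct, but it is genuinely more self-contained than the paper's proof, which is almost entirely citation-based: the paper disposes of \ref{l:5i} by citing \cite[Proposition~17.7(ii)]{Livre1}, of the descent lemma \ref{l:5iia} by citing \cite[Lemma~2.64(i)]{Livre1}, and of \ref{l:5iib} by invoking the Baillon--Haddad theorem \cite{Bail77} (cf.\ \cite[Corollary~18.17]{Livre1}); only \ref{l:5iic} is actually argued there, and on that point you do exactly what the paper does, namely add \ref{l:5i} and \ref{l:5iia} and regroup the inner products via $\scal{x-y}{\nabla g(x)}+\scal{z-x}{\nabla g(x)}=\scal{z-y}{\nabla g(x)}$. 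Your difference-quotient argument for \ref{l:5i} and your integral estimate for \ref{l:5iia} are the standard proofs underlying those citations and are sound (and you correctly note that convexity is not needed for \ref{l:5iia}). The real added value is your proof of \ref{l:5iib}: freezing $x$, introducing $\phi\colon u\mapsto g(u)-\scal{u}{\nabla g(x)}$, which is convex with $\beta$-lipschitzian gradient and minimized at $x$, applying the descent estimate at the point $u-\beta^{-1}\nabla\phi(u)$ to get $\phi(u)-\phi(x)\geq(2\beta)^{-1}\|\nabla g(u)-\nabla g(x)\|^2$, and then symmetrizing in $x$ and $y$, is an entirely elementary route to the cocoercivity inequality, whereas the source cited in the paper reaches it through a chain of equivalences involving Fenchel conjugates. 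What the paper's approach buys is brevity and a pointer to the sharpest known statements; what yours buys is a proof readable without any external reference, at the modest cost of length.
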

\begin{proof}
\ref{l:5i}: \cite[Proposition~17.7(ii)]{Livre1}.

\ref{l:5iia}: 
Ce r\'esultat classique est connu sous le nom de 
<< lemme de descente >> \cite[Lemma~2.64(i)]{Livre1}.

\ref{l:5iib}: Il s'agit du << lemme de Baillon--Haddad >> 
\cite{Bail77}; \emph{cf.} \cite[Corollary~18.17]{Livre1}. 

\ref{l:5iic}: Combiner \ref{l:5i} et \ref{l:5iia}.
\end{proof}

\section{La m\'ethode}
\label{sec:3}

La m\'ethode du gradient proxim\'e est sugg\'er\'ee par la
propri\'et\'e de point fixe suivante qui caract\'erise les
solutions du Probl\`eme~\ref{prob:1}.

\begin{proposition}[\protect{\cite[Proposition~3.1(iii)]{Smms05}}]
\label{p:3}
Dans le contexte du Probl\`eme~\ref{prob:1}, soit 
$\gamma\in\RPP$ et posons 
$T=\prox_{\gamma f}\circ(\Id-\gamma\nabla g)$. Alors
\begin{equation}
\Argmin(f+g)=\menge{x\in\HH}{Tx=x}.
\end{equation}
\end{proposition}

\begin{remark}
On suppose dans le Probl\`eme~\ref{prob:1} l'existence de
solutions. Une condition suffisante pour que $\Argmin(f+g)\neq\emp$
est que $f(x)+g(x)\to\pinf$ quand $\|x\|\to\pinf$ 
\cite[Proposition~11.15(i)]{Livre1}.
\end{remark}

Sur la base de la Proposition~\ref{p:3}, le principe de
l'algorithme est de chercher un point fixe par it\'erations
successives en alternant un pas de gradient et un pas proximal.
Les propri\'et\'es asymptotiques de ce sch\'ema ont \'et\'e
\'etablies sous diverses hypoth\`eses 
dans \cite{MaPa18,Smms05,Merc80,Tsen91}. Le th\'eor\`eme suivant
regroupe les r\'esultats principaux, dont nous donnons une
d\'emonstration \'el\'ementaire.

\begin{theorem}
\label{t:1}
Dans le contexte du Probl\`eme~\ref{prob:1}, fixons $x_0\in\dom f$,
$\varepsilon\in\left]0,\beta^{-1}\right[$ et une suite
$(\gamma_n)_{n\in\NN}$ dans
$\left[\varepsilon,2\beta^{-1}-\varepsilon\right]$.
On it\`ere 
\begin{equation}
\label{e:24}
\begin{array}{l}
\text{pour}\;n=0,1,\ldots\\
\left\lfloor
\begin{array}{ll}
y_n=x_n-\gamma_n\nabla g(x_n)&{\mathrm{(un~pas~de~gradient)}}\\
x_{n+1}=\prox_{\gamma_n f}y_n&{\mathrm{(un~pas~proximal).}}\\
\end{array}
\right.\\[2mm]
\end{array}
\end{equation}
Alors les propri\'et\'es suivantes sont satisfaites, o\`u
$\mu=\min_{x\in\HH}(f(x)+g(x))$ d\'esigne la valeur optimale :
\begin{enumerate}
\item
\label{t:1i}
Convergence du gradient : Soit $x$ une solution du
Probl\`eme~\ref{prob:1}. Alors $\nabla g(x_n)\to\nabla g(x)$
avec $\sum_{n\in\NN}\|\nabla g(x_n)-\nabla g(x)\|^2<\pinf$.
\item
\label{t:1ii}
Convergence monotone en valeur :
$f(x_n)+g(x_n)\downarrow\mu$ avec
$\sum_{n\in\NN}\,(f(x_n)+g(x_n)-\mu)<\pinf$.
\item
\label{t:1iii}
Convergence de it\'er\'ees : 
La suite $(x_n)_{n\in\NN}$ converge vers une solution du
Probl\`eme~\ref{prob:1}.
\end{enumerate}
\end{theorem}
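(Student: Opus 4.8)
The plan is to fix an arbitrary solution $x\in\Argmin(f+g)$ and to exploit the fixed-point description of Proposition~\ref{p:3}, which guarantees that $x=\prox_{\gamma_n f}(x-\gamma_n\nabla g(x))$ for every $n\in\NN$. The proof rests on two complementary energy estimates. First I would compare the two proximal steps $x_{n+1}=\prox_{\gamma_n f}y_n$ and $x=\prox_{\gamma_n f}(x-\gamma_n\nabla g(x))$ by way of the firm nonexpansiveness of $\prox_{\gamma_n f}$ (Lemma~\ref{l:2}): expanding $\|(x_n-\gamma_n\nabla g(x_n))-(x-\gamma_n\nabla g(x))\|^2$ and bounding the cross term by the Baillon--Haddad inequality (Lemma~\ref{l:5}\ref{l:5iib}) gives
\begin{equation}
\|x_{n+1}-x\|^2\le\|x_n-x\|^2-\gamma_n\bigl(2\beta^{-1}-\gamma_n\bigr)\|\nabla g(x_n)-\nabla g(x)\|^2.
\end{equation}
Because $\gamma_n\in\left[\varepsilon,2\beta^{-1}-\varepsilon\right]$ forces $\gamma_n(2\beta^{-1}-\gamma_n)\ge\varepsilon^2$, this single inequality already yields item~\ref{t:1i}: the sequence $(\|x_n-x\|)_{n\in\NN}$ is nonincreasing, so $(x_n)_{n\in\NN}$ is bounded and Fej\'er monotone relative to the solution set, and a telescoping summation gives $\varepsilon^2\sum_{n\in\NN}\|\nabla g(x_n)-\nabla g(x)\|^2\le\|x_0-x\|^2<\pinf$, whence $\nabla g(x_n)\to\nabla g(x)$.

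For item~\ref{t:1ii} I would draw a second family of estimates from the variational characterization \eqref{e:p07} of the proximal point. Testing it at $y=x_n$, dividing by $\gamma_n$, and adding the descent lemma (Lemma~\ref{l:5}\ref{l:5iia}) applied to the pair $(x_n,x_{n+1})$, the gradient cross terms cancel and one is left with
\begin{equation}
f(x_{n+1})+g(x_{n+1})\le f(x_n)+g(x_n)-\bigl(\gamma_n^{-1}-\beta/2\bigr)\|x_n-x_{n+1}\|^2,
\end{equation}
where $\gamma_n^{-1}-\beta/2$ is bounded below by a positive constant. This shows $f(x_n)+g(x_n)\downarrow$ and, upon summation, that $\sum_{n\in\NN}\|x_n-x_{n+1}\|^2<\pinf$, so in particular $\|x_n-x_{n+1}\|\to0$. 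Testing \eqref{e:p07} instead at $y=x$ and combining with the three-point inequality of Lemma~\ref{l:5}\ref{l:5iic}, the polarization identity produces the companion bound
\begin{equation}
f(x_{n+1})+g(x_{n+1})-\mu\le\frac{\|x_n-x\|^2-\|x_{n+1}-x\|^2}{2\gamma_n}+\Bigl(\frac{\beta}{2}-\frac{1}{2\gamma_n}\Bigr)\|x_n-x_{n+1}\|^2.
\end{equation}
Letting $n\to\pinf$ and using $\|x_n-x_{n+1}\|\to0$ together with the convergence of $(\|x_n-x\|^2)_{n\in\NN}$ forces the nonincreasing sequence $(f(x_n)+g(x_n))_{n\in\NN}$ to tend to $\mu$; summing, with $\gamma_n\ge\varepsilon$ and the telescoping of $(\|x_n-x\|^2)_{n\in\NN}$, yields $\sum_{n\in\NN}(f(x_n)+g(x_n)-\mu)<\pinf$.

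Finally, item~\ref{t:1iii} follows from the Fej\'er machinery, and this is where I expect the main difficulty. The bounded sequence $(x_n)_{n\in\NN}$ has a cluster point $\bar x$, and the crux is to prove $\bar x\in\Argmin(f+g)$. Since the step sizes $\gamma_n$ vary, there is no single limiting operator whose fixed points could be invoked; instead I would pass to the limit, along a subsequence converging to $\bar x$, in the inequality obtained by dividing \eqref{e:p07} by $\gamma_n$. There $\|x_n-x_{n+1}\|\to0$ annihilates the quadratic term, continuity of $\nabla g$ controls the linear term, and the lower semicontinuity of $f$ upgrades the limit to $\scal{y-\bar x}{-\nabla g(\bar x)}+f(\bar x)\le f(y)$ for every $y\in\HH$, that is, $0\in\partial f(\bar x)+\nabla g(\bar x)$, so $\bar x$ is a solution. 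Fej\'er monotonicity then makes $(\|x_n-\bar x\|)_{n\in\NN}$ convergent, and since a subsequence tends to $0$, the whole sequence converges to $\bar x$.
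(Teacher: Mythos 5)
Your proposal is correct, and for items \ref{t:1i} and \ref{t:1ii} it is essentially the paper's own proof: you derive the same Fej\'er-type estimate $\|x_{n+1}-x\|^2\le\|x_n-x\|^2-\gamma_n(2\beta^{-1}-\gamma_n)\|\nabla g(x_n)-\nabla g(x)\|^2$ from Proposition~\ref{p:3}, Lemma~\ref{l:2}, and Baillon--Haddad (this is \eqref{e:87}), and the same two value estimates --- the descent inequality \eqref{e:64} and the telescoping bound \eqref{e:66}--\eqref{e:67} --- by adding the prox characterization \eqref{e:p07} to Lemma~\ref{l:5}\ref{l:5iic}.

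The only genuine divergence is how you identify the cluster point $\overline{x}$ as a solution in item \ref{t:1iii}. The paper gets this for free from item \ref{t:1ii}: $\varphi=f+g$ is lower semicontinuous, so $\mu\le\varphi(\overline{x})\le\varliminf\varphi(x_{k_n})=\mu$. You instead pass to the limit in the prox variational inequality to obtain $\scal{y-\overline{x}}{-\nabla g(\overline{x})}+f(\overline{x})\le f(y)$ for all $y\in\HH$, a demiclosedness-type argument. This is valid: $\gamma_{k_n}^{-1}(x_{k_n}-x_{k_n+1})\to 0$ because $\gamma_n\ge\varepsilon$ and $\sum_{n\in\NN}\|x_n-x_{n+1}\|^2<\pinf$, and the same fact gives $x_{k_n+1}\to\overline{x}$, which your limit requires but leaves implicit. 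Two remarks. First, to close your argument it is better to avoid the sum rule hidden behind ``$0\in\partial f(\overline{x})+\nabla g(\overline{x})$, so $\overline{x}$ is a solution'', which is not among the paper's quoted tools; instead add Lemma~\ref{l:5}\ref{l:5i}, namely $g(y)\ge g(\overline{x})+\scal{y-\overline{x}}{\nabla g(\overline{x})}$, to your limiting inequality to get $\varphi(y)\ge\varphi(\overline{x})$ directly. Second, your route makes \ref{t:1iii} logically independent of the value analysis in \ref{t:1ii} (it would survive even if one only proved the Fej\'er estimate and $\|x_{n+1}-x_n\|\to 0$), whereas the paper's route is shorter precisely because \ref{t:1ii} is already in hand; in the stated Euclidean setting both are complete.
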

\begin{proof}
Posons $\varphi=f+g$ et prenons $y\in\HH$. 
Pour tout $n\in\NN$, puisque $x_{n+1}=\prox_{\gamma_n f}y_n$,
le Lemme~\ref{l:1} donne 
$\gamma_n^{-1}\scal{y-x_{n+1}}{y_n-x_{n+1}}+f(x_{n+1})\leq f(y)$,
d'o\`u
\begin{equation}
\label{e:59}
(\forall n\in\NN)\quad
f(x_{n+1})\leq f(y)+\scal{x_{n+1}-y}{\gamma_n^{-1}
(x_n-x_{n+1})-\nabla g(x_n)}.
\end{equation}
En outre, en vertu du Lemme~\ref{l:5}\ref{l:5iic}, 
\begin{equation}
\label{e:60}
(\forall n\in\NN)\quad
g(x_{n+1})\leq g(y)+\scal{x_{n+1}-y}{\nabla
g(x_n)}+\dfrac{\beta}{2}\|x_{n+1}-x_n\|^2.
\end{equation}
En additionnant \eqref{e:59} et \eqref{e:60} terme \`a terme, 
on obtient
\begin{align}
(\forall n\in\NN)\quad
\varphi(x_{n+1})
&\leq\varphi(y)+\dfrac{1}{\gamma_n}\scal{x_{n+1}-y}
{x_n-x_{n+1}}+\dfrac{\beta}{2}\|x_{n+1}-x_n\|^2\label{e:63}\\
&=\varphi(y)+\dfrac{1}{2\gamma_n}
\brk1{\|x_n-y\|^2-\|x_{n+1}-y\|^2}
-\dfrac{1}{2}\brk3{\dfrac{1}{\gamma_n}-\beta}\|x_{n+1}-x_n\|^2.
\label{e:66}
\end{align}
Par ailleurs, en invoquant la Proposition~\ref{p:3} et les 
Lemmes~\ref{l:2} et \ref{l:5}\ref{l:5iib}, on voit que, pour tout
$x\in\Argmin\varphi$ et tout $n\in\NN$,
\begin{align}
\label{e:87}
\|x_{n+1}-x\|^2
&=\bigl\|\prox_{\gamma_n f}\brk1{x_{n}-\gamma_n\nabla g(x_n)}-
\prox_{\gamma_n f}\brk1{x-\gamma_n\nabla g(x)}\bigr\|^2\nonumber\\
&\leq\bigl\|(x_n-x)-\gamma_n\brk1{\nabla g(x_n)-
\nabla g(x)}\bigr\|^2\nonumber\\
&=\|x_n-x\|^2-2\gamma_n\scal{x_n-x}{\nabla g(x_n)-\nabla g(x)}
+\gamma_n^2\|\nabla g(x_n)-\nabla g(x)\|^2\nonumber\\
&\leq\|x_n-x\|^2-2\beta^{-1}\gamma_n\|\nabla g(x_n)-\nabla g(x)\|^2
+\gamma_n^2\|\nabla g(x_n)-\nabla g(x)\|^2\nonumber\\
&=\|x_n-x\|^2-\gamma_n(2\beta^{-1}-\gamma_n)
\|\nabla g(x_n)-\nabla g(x)\|^2\nonumber\\
&\leq\|x_n-x\|^2-\varepsilon^2\|\nabla g(x_n)-\nabla g(x)\|^2.
\end{align}

\ref{t:1i}: On d\'eduit de \eqref{e:87} que
$(\forall N\in\NN)$ $\varepsilon^2\sum_{n=0}^N
\|\nabla g(x_n)-\nabla g(x)\|^2
\leq\|x_0-x\|^2-\|x_{N+1}-x\|^2\leq\|x_0-x\|^2$.
On fait alors tendre $N$ vers $\pinf$ pour obtenir l'assertion.

\ref{t:1ii}:
En prenant $y=x_n$ dans \eqref{e:63}, on constate que
\begin{equation}
\label{e:64}
(\forall n\in\NN)\quad\varphi(x_{n+1})
\leq\varphi(x_n)-\brk3{\dfrac{1}{\gamma_n}-\dfrac{\beta}{2}}
\|x_{n+1}-x_n\|^2
\leq\varphi(x_n)-\dfrac{\varepsilon\beta^2}{4}\|x_{n+1}-x_n\|^2
\end{equation}
et donc que
\begin{equation}
\label{e:93}
(\forall N\in\NN)\quad
\dfrac{\varepsilon\beta^2}{4}\sum_{n=0}^N
\|x_{n+1}-x_n\|^2\leq\varphi(x_0)-\varphi(x_{N+1})
\leq\varphi(x_0)-\mu.
\end{equation}
Il s'ensuit en faisant tendre $N$ vers $\pinf$ que
\begin{equation}
\label{e:61}
\sum_{n\in\NN}\|x_{n+1}-x_n\|^2\leq
\dfrac{4\brk1{\varphi(x_0)-\mu}}{\varepsilon\beta^2}<\pinf. 
\end{equation}
Par ailleurs, en prenant $y\in\Argmin\varphi$ dans \eqref{e:66}, on
d\'eduit de \eqref{e:87} que, pour tout $n\in\NN$,
\begin{align}
\label{e:67}
0&\leq\varphi(x_{n+1})-\mu\nonumber\\
&\leq\dfrac{1}{2\gamma_n}
\brk1{\|x_n-y\|^2-\|x_{n+1}-y\|^2}
-\dfrac{1}{2}\brk3{\dfrac{1}{\gamma_n}-\beta}\|x_{n+1}-x_n\|^2
\nonumber\\
&\leq
\begin{cases}
\dfrac{1}{2\varepsilon}\brk1{\|x_n-y\|^2-\|x_{n+1}-y\|^2},
&\text{si}\;\;\gamma_n\leq\dfrac{1}{\beta};\\
\dfrac{1}{2\varepsilon}\brk1{\|x_n-y\|^2-\|x_{n+1}-y\|^2}
+\dfrac{\beta}{2}\|x_{n+1}-x_n\|^2,
&\text{si}\;\;\gamma_n>\dfrac{1}{\beta}.
\end{cases}
\end{align}
En observant que 
$\sum_{n\in\NN}(\|x_n-y\|^2-\|x_{n+1}-y\|^2)\leq\|x_0-y\|^2<\pinf$
et en faisant appel \`a \eqref{e:61}, on obtient
$\sum_{n\in\NN}(\varphi(x_n)-\mu)<\pinf$. Par suite, \eqref{e:64}
garantit que $\varphi(x_n)\downarrow\mu$.

\ref{t:1iii}: D'apr\`es \eqref{e:87},  $(x_n)_{n\in\NN}$ est 
born\'ee. On peut donc en extraire une sous-suite convergente,
disons $x_{k_n}\to\overline{x}$. Notons que la semicontinuit\'e
inf\'erieure de $f$ et la continuit\'e de $g$ entra\^{\i}nent la
semicontinuit\'e inf\'erieure de $\varphi$. On tire ainsi de
\ref{t:1ii} que
$\mu\leq\varphi(\overline{x})\leq\varliminf\varphi(x_{k_n})=\mu$,
d'o\`u $\overline{x}\in\Argmin\varphi$. Puisque, d'apr\`es
\eqref{e:87}, $(\|x_n-\overline{x}\|)_{n\in\NN}$ d\'ecro\^{\i}t,
on conclut que $x_n\to\overline{x}$. 
\end{proof}

\begin{remark}
\label{r:1}\
\begin{enumerate}
\item
Si $f=0$, alors \eqref{e:24} se r\'eduit \`a l'algorithme de
descente de gradient $x_{n+1}=x_n-\gamma_n\nabla g(x_n)$.
\item
Si $g=0$, alors \eqref{e:24} se r\'eduit \`a l'algorithme du point
proximal $x_{n+1}=\prox_{\gamma_n f}x_n$.
\item
\label{r:1i}
Les conclusions du Th\'eor\`eme~\ref{t:1} demeurent valides en
pr\'esence de perturbations affectant les op\'erateurs 
$\prox_{\gamma_n f}$ et $\nabla g$ dans \eqref{e:24} et dans des
espaces de Hilbert g\'en\'eraux, la convergence dans \ref{t:1iii}
\'etant alors comprise au sens de la topologie faible
\cite{MaPa18}.
\item
\label{r:1ii}
Le Th\'eor\`eme~\ref{t:1} fournit \`a la fois 
un taux de convergence sur les valeurs dans \ref{t:1ii}, puisqu'on
en d\'eduit ais\'ement que $f(x_n)+g(x_n)-\mu=o(1/n)$
\cite{MaPa18}, et la convergence des it\'er\'ees dans \ref{t:1iii}. 
\item
\label{r:1iii}
Une version inertielle de la m\'ethode du gradient proxim\'e a 
\'et\'e propos\'ee dans \cite{Bec09a} sous la forme
\begin{equation}
\begin{array}{l}
\text{pour}\;n=0,1,\ldots\\
\left\lfloor
\begin{array}{l}
y_{n}=z_n-\beta^{-1}\nabla g(z_n)\\[1mm]
x_{n+1}={\mathrm{prox}}_{\beta^{-1}f}y_n\\[2mm]
\displaystyle{t_{n+1}=\frac{1+\sqrt{4t_n^2+1}}{2}}\\[2mm]
\displaystyle{\lambda_{n} = 1+\frac{t_n-1}{t_{n+1}}}\\[3mm]
\displaystyle{z_{n+1}=x_{n}+\lambda_n(x_{n+1}-x_n)}.
\end{array}
\right.\\
\end{array}
\end{equation}
Cet algorithme atteint un taux $f(x_n)+g(x_n)-\mu=O(1/n^2)$ sur les
valeurs, mais sans garantie de convergence des it\'er\'ees
$(x_n)_{n\in\NN}$, et il n\'ecessite en outre le stockage d'une
variable suppl\'ementaire. Une variante qui garantit la convergence
des it\'er\'ees avec un taux similaire sur les valeurs est 
analys\'ee dans \cite{Cham15}. On pourra \'egalement consulter 
\cite{Atto18,Siop17} sur ce sujet.
\end{enumerate}
\end{remark}

Un cas particulier du Probl\`eme~\ref{prob:1} est celui de la
minimisation d'une fonction lisse sous contrainte. Cela revient \`a
choisir comme fonction $f$ l'indicatrice de l'ensemble des
contraintes.

\begin{problem}
\label{prob:2}
Soient $\beta\in\RPP$, $C$ une partie convexe ferm\'ee non vide 
de $\HH$ et $g\colon\HH\to\RR$ une fonction convexe
diff\'erentiable de gradient $\beta$-lipschitzien. L'objectif est
de 
\begin{equation}
\label{e:prob2}
\minimize{x\in C}{g(x)}
\end{equation}
sous l'hypoth\`ese qu'une solution existe. 
\end{problem}

Au vu de l'Exemple~\ref{ex:10}, on obtient alors la m\'ethode du
gradient projet\'e.

\begin{corollary}
\label{c:1}
Dans le contexte du Probl\`eme~\ref{prob:2}, fixons $x_0\in C$,
$\varepsilon\in\left]0,\beta^{-1}\right[$ et une suite
$(\gamma_n)_{n\in\NN}$ dans
$\left[\varepsilon,2\beta^{-1}-\varepsilon\right]$. It\'erons
\begin{equation}
\label{e:25}
\begin{array}{l}
\text{pour}\;n=0,1,\ldots\\
\left\lfloor
\begin{array}{ll}
y_n=x_n-\gamma_n\nabla g(x_n)&{\mathrm{(un~pas~de~gradient)}}\\
x_{n+1}=\proj_{C}y_n&{\mathrm{(un~pas~de~projection).}}\\
\end{array}
\right.\\[2mm]
\end{array}
\end{equation}
Alors les propri\'et\'es suivantes sont satisfaites, o\`u
$\mu=\min_{x\in C}g(x)$ d\'esigne la valeur optimale :
\begin{enumerate}
\item
\label{c:1i}
Soit $x$ une solution du
Probl\`eme~\ref{prob:2}. Alors $\nabla g(x_n)\to\nabla g(x)$
avec $\sum_{n\in\NN}\|\nabla g(x_n)-\nabla g(x)\|^2<\pinf$.
\item
\label{c:1ii}
$g(x_n)\downarrow\mu$ avec
$\sum_{n\in\NN}\,(g(x_n)-\mu)<\pinf$.
\item
\label{c:1iii}
La suite $(x_n)_{n\in\NN}$ converge vers une solution du
Probl\`eme~\ref{prob:2}.
\end{enumerate}
\end{corollary}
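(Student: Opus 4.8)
The plan is to recognize the projected gradient method as the instance of the proximal gradient method obtained by taking $f=\iota_C$, and then to transfer the conclusions of Th\'eor\`eme~\ref{t:1} essentially verbatim. First I would record the facts from Exemple~\ref{ex:10} that legitimize the reduction: $\iota_C\in\Gamma_0(\HH)$, so that the pair $(\iota_C,g)$ satisfies the hypotheses of Probl\`eme~\ref{prob:1}; the identity $\prox_{\iota_C}=\proj_C$; and the equality $\dom\iota_C=C$, so that the initialization $x_0\in C$ matches the requirement $x_0\in\dom f$ of Th\'eor\`eme~\ref{t:1}.

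Next I would verify that the two problems share the same data. Since $\iota_C(x)+g(x)$ equals $g(x)$ when $x\in C$ and $\pinf$ otherwise, minimizing $\iota_C+g$ over $\HH$ amounts to minimizing $g$ over $C$; hence $\Argmin(\iota_C+g)$ coincides with the solution set of Probl\`eme~\ref{prob:2}, which is nonempty by hypothesis, and the optimal value is $\mu=\min_{x\in C}g(x)$, exactly as in the statement.

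The single point deserving care is that iteration \eqref{e:25} genuinely is iteration \eqref{e:24} for $f=\iota_C$. Because $\iota_C$ takes only the values $0$ and $\pinf$, one has $\gamma\iota_C=\iota_C$ for every $\gamma\in\RPP$, so that $\prox_{\gamma_n\iota_C}=\prox_{\iota_C}=\proj_C$ is independent of the step size $\gamma_n$; the proximal step of \eqref{e:24} thereby reduces to the projection step of \eqref{e:25}. I expect this scaling invariance of the indicator to be the only nonroutine observation in the whole argument.

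With the reduction established, I would simply invoke Th\'eor\`eme~\ref{t:1}. Assertions \ref{c:1i} and \ref{c:1iii} follow at once from \ref{t:1i} and \ref{t:1iii}. For \ref{c:1ii}, I would observe that $x_0\in C$ and $x_{n+1}=\proj_Cy_n\in C$ for every $n\in\NN$, so that the whole sequence $(x_n)_{n\in\NN}$ lies in $C$ and $\iota_C(x_n)=0$; consequently $(\iota_C+g)(x_n)=g(x_n)$, and the monotone convergence $g(x_n)\downarrow\mu$ together with the summability $\sum_{n\in\NN}(g(x_n)-\mu)<\pinf$ are immediate from \ref{t:1ii}.
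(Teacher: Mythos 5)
Your proposal is correct and follows exactly the route the paper intends: the corollary is stated there without proof as an immediate specialization of Th\'eor\`eme~\ref{t:1} to $f=\iota_C$ via l'Exemple~\ref{ex:10}, which is precisely your reduction. Your explicit checks (notably $\prox_{\gamma_n\iota_C}=\proj_C$ by scaling invariance of the indicator, $x_0\in C=\dom\iota_C$, and $(\iota_C+g)(x_n)=g(x_n)$ since the iterates stay in $C$) fill in the details the paper leaves implicit, and they are all accurate.
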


\section{Applications}
\label{sec:4}

\subsection{Mod\`ele g\'en\'eral avec enveloppes de Moreau}
\label{sec:41}

Cette section s'articule autour de la notion suivante due \`a
Moreau \cite{Mor63a}.

\begin{definition}
\label{d:MoYo}
Soient $h\in\Gamma_0(\HH)$ et $\rho\in\RPP$. Alors
\begin{equation}
\label{e:moyo0}
\moyo{h}{\rho}\colon\HH\to\RR\colon x\mapsto
\inf_{y\in\HH}\brk3{h(y)+\dfrac{1}{2\rho}\|x-y\|^2}
\end{equation}
est l'\emph{enveloppe de Moreau} de $h$ de param\`etre $\rho$.
\end{definition}
  
\begin{lemma}
\label{l:9}
Soient $h\in\Gamma_0(\HH)$ et $\rho\in\RPP$. Posons
$g=\moyo{h}{\rho}$. Alors les propri\'et\'es suivantes sont
satisfaites:
\begin{enumerate}
\item
\label{l:9i}
$g\colon\HH\to\RR$ est convexe et diff\'erentiable,
avec $\nabla g=(\Id-\prox_{\rho h})/\rho$.
\item
\label{l:9ii}
$\Argmin\moyo{h}{\rho}=\Argmin h$.
\end{enumerate}
\end{lemma}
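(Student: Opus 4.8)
The plan is to reduce everything to the proximity operator of $\rho h$. Since $\rho\in\RPP$, minimizing $y\mapsto h(y)+(2\rho)^{-1}\|x-y\|^2$ is equivalent to minimizing $\rho h(y)+2^{-1}\|x-y\|^2$, and applying Lemma~\ref{l:1} to $\rho h\in\Gamma_0(\HH)$ shows that the infimum in \eqref{e:moyo0} is attained at the unique point $p:=\prox_{\rho h}x$. Hence $g$ is real-valued and
\[
g(x)=h(p)+\frac{1}{2\rho}\|x-p\|^2 .
\]
The characterization \eqref{e:p07} applied to $\rho h$ then yields the affine minorant
\[
(\forall y\in\HH)\quad h(y)\geq h(p)+\frac{1}{\rho}\scal{y-p}{x-p},
\]
which is the engine of the whole argument.

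For \ref{l:9i}, the strategy is to sandwich $g$ between two parabolas sharing the same candidate slope $u(x):=\rho^{-1}(x-p)$. For the upper bound I would feed the suboptimal choice $y=p$ into the infimum defining $g(z)$ and expand $\|z-p\|^2=\|z-x\|^2+2\scal{z-x}{x-p}+\|x-p\|^2$, which gives $g(z)\leq g(x)+\scal{z-x}{u(x)}+(2\rho)^{-1}\|z-x\|^2$. For the matching lower bound I would insert the affine minorant of $h$ above into the infimum defining $g(z)$ and minimize the resulting quadratic in $y$; the minimizer is $y=p+(z-x)$, and the value collapses exactly to $g(x)+\scal{z-x}{u(x)}$, whence $g(z)\geq g(x)+\scal{z-x}{u(x)}$. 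Combining the two bounds yields $\lvert g(z)-g(x)-\scal{z-x}{u(x)}\rvert\leq(2\rho)^{-1}\|z-x\|^2$, so $g$ is Fréchet differentiable with $\nabla g(x)=u(x)=\rho^{-1}(x-\prox_{\rho h}x)$. The lower bound, being precisely the tangent-plane inequality at every point, simultaneously gives the convexity of $g$.

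For \ref{l:9ii}, since $g$ is convex and differentiable on all of $\HH$, its set of minimizers coincides with $\zer\nabla g$. By \ref{l:9i}, $\nabla g(x)=0$ if and only if $x=\prox_{\rho h}x$, so $\Argmin\moyo{h}{\rho}=\Fix\prox_{\rho h}$. It then remains to identify this fixed-point set with $\Argmin h$, which is Proposition~\ref{p:3} applied with smooth part $0$ and $\gamma=\rho$; alternatively, reading \eqref{e:p07} at $p=x$ shows directly that $x=\prox_{\rho h}x\Leftrightarrow(\forall y\in\HH)\,\rho h(x)\leq\rho h(y)\Leftrightarrow x\in\Argmin h$.

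I expect the lower bound in \ref{l:9i} to be the only genuine obstacle: everything else is a routine application of Lemma~\ref{l:1} once the affine minorant is in hand, but the lower bound requires recognizing that linearizing $h$ and then minimizing the remaining quadratic in $y$ reproduces the tangent plane exactly. A symmetric alternative, should the explicit minimization be awkward, is to evaluate $g(z)$ via its own proximal point $q:=\prox_{\rho h}z$, apply the affine minorant with $y=q$, and complete the square; this even sharpens the lower bound to $g(z)-g(x)-\scal{z-x}{u(x)}\geq(2\rho)^{-1}\|(x-p)-(z-q)\|^2\geq0$.
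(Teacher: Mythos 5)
Your proposal is correct, and in fact it does more than the paper does. For \ref{l:9ii} you follow essentially the paper's own route: by \ref{l:9i} and Fermat's rule, $x\in\Argmin\moyo{h}{\rho}$ $\Leftrightarrow$ $\nabla\moyo{h}{\rho}(x)=0$ $\Leftrightarrow$ $x=\prox_{\rho h}x$, and this fixed-point set equals $\Argmin h$ by Proposition~\ref{p:3} applied with $g=0$; your direct verification of that last equivalence by reading \eqref{e:p07} at $p=x$ is an equally valid substitute. The real difference is in \ref{l:9i}: the paper offers no argument at all, simply citing \cite[Proposition~12.30]{Livre1} or \cite{Mor63a}, whereas you prove it from first principles, and your computations are sound. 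Lemma~\ref{l:1} applied to $\rho h$ shows that the infimum in \eqref{e:moyo0} is attained at $p=\prox_{\rho h}x$ (so $g$ is real-valued), the suboptimal choice $y=p$ gives the upper parabola, and minimizing the objective linearized via \eqref{e:p07} gives the matching lower tangent plane; the resulting sandwich
\begin{equation*}
g(x)+\scal{z-x}{u(x)}\;\leq\; g(z)\;\leq\;
g(x)+\scal{z-x}{u(x)}+\dfrac{1}{2\rho}\|z-x\|^2,
\qquad u(x)=\dfrac{x-\prox_{\rho h}x}{\rho},
\end{equation*}
yields Fr\'echet differentiability with $\nabla g=u$, and convexity follows since a differentiable function lying above all of its tangent planes is convex. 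What your approach buys is self-containedness (everything reduces to \eqref{e:p07}); what the citation buys is brevity. Your refined lower bound is a genuine bonus worth keeping: since $(z-q)-(x-p)=\rho(u(z)-u(x))$, it reads $g(z)\geq g(x)+\scal{z-x}{\nabla g(x)}+(\rho/2)\|\nabla g(z)-\nabla g(x)\|^2$, and summing this with the same inequality with $x$ and $z$ interchanged gives $\scal{z-x}{\nabla g(z)-\nabla g(x)}\geq\rho\|\nabla g(z)-\nabla g(x)\|^2$, which is exactly the cocoercivity estimate of Lemma~\ref{l:5}\ref{l:5iib} with $\beta=\rho^{-1}$; in particular $\nabla g$ is $\rho^{-1}$-lipschitzien, a fact the paper needs later (proof of Corollaire~\ref{c:70}) and obtains there separately via Lemma~\ref{l:2}.
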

\begin{proof}
\ref{l:9i}: \cite[Proposition~12.30]{Livre1} ou \cite{Mor63a}.

\ref{l:9ii}: Soit $x\in\HH$. Alors, d'apr\`es \ref{l:9i} et la
Proposition~\ref{p:3} (appliqu\'ee avec $g=0$), 
$x\in\Argmin\moyo{h}{\rho}$ $\Leftrightarrow$ 
$\nabla\moyo{h}{\rho}(x)=0$ $\Leftrightarrow$ 
$x=\prox_{\rho h}x$ $\Leftrightarrow$ $x\in\Argmin h$.
\end{proof}

\begin{example}[distance \`a un convexe]
\label{ex:62}
Soient $C$ une partie convexe ferm\'ee non vide de $\HH$
et $d_C\colon x\mapsto\inf_{y\in C}\|x-y\|$ sa fonction distance.
Alors on d\'eduit de \eqref{e:moyo0} que 
$\moyo{\iota_C}{1}\colon\HH\to\RR\colon x\mapsto
\inf_{y\in C}\|x-y\|^2/2=d_C^2(x)/2$ et du 
Lemme~\ref{l:9}\ref{l:9i} que $\nabla d_C^2/2=\Id-\proj_C$. 
\end{example}

Nous pr\'esentons \`a pr\'esent notre corollaire principal du
Th\'eor\`eme~\ref{t:1}\ref{t:1iii}. Ce type de formulation
a \'et\'e propos\'e en th\'eorie du signal \cite{Smms05}.

\begin{corollary}
\label{c:70}
Soit $f\in\Gamma_0(\HH)$ et, pour tout $k\in\{1,\ldots,p\}$, soient
$h_k\in\Gamma_0(\GG_k)$, $L_k\colon\HH\to\GG_k$ un op\'erateur
lin\'eaire non nul, $\omega_k\in\RPP$ et $\rho_k\in\RPP$.
L'objectif est de 
\begin{equation}
\label{e:70}
\minimize{x\in\HH}{f(x)+\sum_{k=1}^p\omega_k
\brk1{\moyo{h_k}{\rho_k}}(L_kx)},
\end{equation}
sous l'hypoth\`ese qu'une solution existe. Fixons
\begin{equation}
\label{e:hyp1}
x_0\in\HH,\;\varepsilon\in\left]0,\beta^{-1}\right[\;\text{et}\;
\text{une suite}\;(\gamma_n)_{n\in\NN}\;\text{dans}\;
\left[\varepsilon,2\beta^{-1}-\varepsilon\right],\;\text{o\`u}\;\;
\beta=\sum_{k=1}^p\dfrac{\omega_k\|L_k\|^2}{\rho_k},
\end{equation}
et it\'erons
\begin{equation}
\label{e:a70}
\begin{array}{l}
\text{pour}\;n=0,1,\ldots\\
\left\lfloor
\begin{array}{l}
\text{pour}\;k=1,\ldots,p\\
\begin{array}{l}
\left\lfloor
z_{n,k}=\dfrac{\omega_k}{\rho_k}
L_k^*\brk1{L_kx_n-\prox_{\rho_kh_k}(L_kx_n)}
\right.\\[4mm]
\end{array}\\
y_n=x_n-\gamma_n\displaystyle{\sum_{k=1}^p}z_{n,k}\\
x_{n+1}=\prox_{\gamma_n f}y_n.
\end{array}
\right.\\
\end{array}
\end{equation}
Alors $(x_n)_{n\in\NN}$ converge vers une solution de \eqref{e:70}.
\end{corollary}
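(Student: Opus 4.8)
La strat\'egie consiste \`a reconna\^itre l'objectif \eqref{e:70} comme une instance du Probl\`eme~\ref{prob:1}, puis \`a appliquer directement le Th\'eor\`eme~\ref{t:1}\ref{t:1iii}. \`A cet effet, je poserais $g\colon\HH\to\RR\colon x\mapsto\sum_{k=1}^p\omega_k\brk1{\moyo{h_k}{\rho_k}}(L_kx)$ en conservant $f$ inchang\'ee, de sorte que \eqref{e:70} revient \`a minimiser $f+g$ sur $\HH$.

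Je v\'erifierais d'abord que $g$ rel\`eve bien du cadre du Probl\`eme~\ref{prob:1}. D'apr\`es le Lemme~\ref{l:9}\ref{l:9i}, chaque $\moyo{h_k}{\rho_k}$ est une fonction convexe, diff\'erentiable et \`a valeurs r\'eelles, de gradient $(\Id-\prox_{\rho_kh_k})/\rho_k$. Comme chaque $L_k$ est lin\'eaire et chaque $\omega_k>0$, la compos\'ee $x\mapsto\omega_k\brk1{\moyo{h_k}{\rho_k}}(L_kx)$ h\'erite de la convexit\'e et de la diff\'erentiabilit\'e, et la r\`egle de d\'erivation en cha\^ine fournit son gradient $x\mapsto(\omega_k/\rho_k)L_k^*\brk1{L_kx-\prox_{\rho_kh_k}(L_kx)}$. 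En sommant sur $k$, $g$ est convexe et diff\'erentiable, avec
\[
\nabla g(x)=\sum_{k=1}^p\dfrac{\omega_k}{\rho_k}L_k^*\brk1{L_kx-\prox_{\rho_kh_k}(L_kx)}.
\]

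L'\'etape d\'elicate, et principale, est l'estimation de la constante de Lipschitz de $\nabla g$. L'observation cl\'e est que la contraction ferme de $\prox_{\rho_kh_k}$ \'etablie au Lemme~\ref{l:2} entra\^ine, via \eqref{e:f}, que $\Id-\prox_{\rho_kh_k}$ est $1$-lipschitzien; par cons\'equent $\nabla\moyo{h_k}{\rho_k}$ est $\rho_k^{-1}$-lipschitzien. En combinant ceci avec $\|L_k^*\|=\|L_k\|$ et l'in\'egalit\'e triangulaire, j'obtiendrais, pour tous $x\in\HH$ et $y\in\HH$,
\[
\|\nabla g(x)-\nabla g(y)\|\le\sum_{k=1}^p\dfrac{\omega_k\|L_k\|^2}{\rho_k}\|x-y\|=\beta\|x-y\|,
\]
o\`u $\beta$ est exactement la constante fix\'ee en \eqref{e:hyp1}. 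Ainsi $f+g$ constitue une instance du Probl\`eme~\ref{prob:1}, et l'hypoth\`ese d'existence d'une solution est pr\'ecis\'ement celle suppos\'ee ici.

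Il ne resterait qu'\`a identifier la r\'ecurrence. Puisque $z_{n,k}=(\omega_k/\rho_k)L_k^*\brk1{L_kx_n-\prox_{\rho_kh_k}(L_kx_n)}$, on a $\sum_{k=1}^pz_{n,k}=\nabla g(x_n)$, de sorte que \eqref{e:a70} s'\'ecrit $y_n=x_n-\gamma_n\nabla g(x_n)$ puis $x_{n+1}=\prox_{\gamma_nf}y_n$, ce qui est exactement l'it\'eration \eqref{e:24}. L'hypoth\`ese $x_0\in\dom f$ du Th\'eor\`eme~\ref{t:1} n'est pas restrictive ici : comme $x_1=\prox_{\gamma_0f}y_0\in\dom f$, il suffit d'appliquer le Th\'eor\`eme~\ref{t:1}\ref{t:1iii} \`a la suite d\'ecal\'ee, la convergence de $(x_n)_{n\in\NN}$ vers une solution de \eqref{e:70} en d\'ecoulant.
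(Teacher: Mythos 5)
Your proof is correct and follows essentially the same route as the paper: poser $g=\sum_{k=1}^p\omega_k\brk1{\moyo{h_k}{\rho_k}}\circ L_k$, invoquer le Lemme~\ref{l:9}\ref{l:9i} pour la diff\'erentiabilit\'e et le Lemme~\ref{l:2} pour montrer que $\nabla g$ est $\beta$-lipschitzien, puis identifier \eqref{e:a70} \`a \eqref{e:24} et conclure par le Th\'eor\`eme~\ref{t:1}\ref{t:1iii}. Votre argument de d\'ecalage pour concilier l'hypoth\`ese $x_0\in\HH$ de \eqref{e:hyp1} avec l'exigence $x_0\in\dom f$ du Th\'eor\`eme~\ref{t:1} (en notant que $x_1=\prox_{\gamma_0 f}y_0\in\dom f$) traite correctement un point que la d\'emonstration du papier passe sous silence.
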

\begin{proof}
Posons $g=\sum_{k=1}^p\omega_k
\brk1{\moyo{h_k}{\rho_k}}\circ L_k$. Alors on d\'eduit du
Lemme~\ref{l:9}\ref{l:9i} que $g$ est convexe et diff\'erentiable
avec
\begin{equation}
\nabla g\colon x\mapsto\sum_{k=1}^p\omega_k
L_k^*\circ\nabla\brk1{\moyo{h_k}{\rho_k}}\circ L_k
=\sum_{k=1}^p\dfrac{\omega_k}{\rho_k}
L_k^*\circ(\Id-\prox_{\rho_k h_k})\circ L_k.
\end{equation}
Puisque le Lemme~\ref{l:2} garantit que les op\'erateurs
$(\Id-\prox_{\rho_k h_k})_{1\leq k\leq p}$ sont 1-lipschitziens,
$\nabla g$ est $\beta$-lipschitzien. Nous constatons ainsi
que \eqref{e:70} est un cas particulier de \eqref{e:prob1} et 
que \eqref{e:a70} est un cas particulier de \eqref{e:24}. La
conclusion d\'ecoule donc du Th\'eor\`eme~\ref{t:1}\ref{t:1iii}.
\end{proof}

\subsection{Cas particuliers}
\label{sec:42}

Nous d\'ecrivons plusieurs applications du Corollaire~\ref{c:70},
dont nous adoptons les notations et les hypoth\`eses.

\begin{example}
\label{ex:31}
On consid\`ere le probl\`eme de minimisation conjoint 
\begin{equation}
\label{e:31}
\text{trouver}\;\:x\in\Argmin f\;\;\text{tel que}\;\;
(\forall k\in\{1,\ldots,p\})\;\; L_kx\in\Argmin h_k.
\end{equation}
On peut interpr\'eter \eqref{e:70} comme une relaxation de ce
probl\`eme au sens o\`u, si \eqref{e:70} admet des solutions, alors
elles co\"{\i}ncident avec celles de \eqref{e:31} (\emph{cf.} 
Lemme~\ref{l:9}\ref{l:9ii}) tandis que, si \eqref{e:70} n'admet pas
de solution, alors \eqref{e:31} fournit des solutions
g\'en\'eralis\'ees \cite[Section~8.4.2]{Acnu24}. En particulier,
l'algorithme \eqref{e:a70} r\'esout \eqref{e:31} si ce dernier
admet une solution.
\end{example}

\begin{example}
\label{ex:32}
Dans l'Exemple~\ref{ex:31}, supposons que 
\begin{equation}
f=\iota_C\quad\text{et}\quad(\forall k\in\{1,\ldots,p\})\quad
h_k=\iota_{D_k}\quad\text{et}\quad\rho_k=1,
\end{equation}
o\`u $C\subset\HH$ et, pour chaque $k\in\{1\ldots,p\}$, 
$D_k\subset\GG_k$ sont des ensembles convexes ferm\'ees et
non vides. Alors \eqref{e:31} se r\'eduit au probl\`eme
d'admissibilit\'e convexe
\begin{equation}
\label{e:32}
\text{trouver}\;\:x\in C\;\;\text{tel que}\;\;
(\forall k\in\{1,\ldots,p\})\;\; L_kx\in D_k,
\end{equation}
tandis que, compte tenu de l'Exemple~\ref{ex:62}, 
\eqref{e:70} prend la forme du probl\`eme aux moindres carr\'es
contraints
\begin{equation}
\label{e:33}
\minimize{x\in C}{\sum_{k=1}^p\omega_kd_{D_k}^2(L_kx)}.
\end{equation}
Dans ce sc\'enario, en invoquant l'Exemple~\ref{ex:10}, 
on voit que \eqref{e:a70} se r\'e\'ecrit sous la forme 
\begin{equation}
\label{e:a32}
\begin{array}{l}
\text{pour}\;n=0,1,\ldots\\
\left\lfloor
\begin{array}{l}
\text{pour}\;k=1,\ldots,p\\
\begin{array}{l}
\left\lfloor
z_{n,k}=\omega_kL_k^*\brk1{L_kx_n-\proj_{D_k}(L_kx_n)}
\right.\\[1mm]
\end{array}\\
y_n=x_n-\gamma_n\displaystyle{\sum_{k=1}^p}z_{n,k}\\
x_{n+1}=\proj_{C}y_n,
\end{array}
\right.\\
\end{array}
\end{equation}
o\`u
\begin{equation}
\label{e:hyp6}
x_0\in\HH,\;\varepsilon\in\left]0,\beta^{-1}\right[\;\text{et}\;
(\gamma_n)_{n\in\NN}\;\text{est une suite dans}\;
\left[\varepsilon,2\beta^{-1}-\varepsilon\right],\;\text{avec}\;\;
\beta=\sum_{k=1}^p\omega_k\|L_k\|^2.
\end{equation}
\end{example}

\begin{example}[\protect{\cite{Lege05}}]
\label{ex:1805}
Dans l'Exemple~\ref{ex:32}, prenons
$\HH=\RR^N$, $C=\RR^N$ et, pour chaque 
$k\in\{1,\ldots,p\}$, $\GG_k=\RR$, $L_k\colon x\mapsto a_k^\top x$,
o\`u $a_k\in\RR^N$, $\omega_k=1$ et $D_k=\{\eta_k\}$, o\`u 
$\eta_k\in\RR$. Soit $A\in\RR^{p\times N}$ la matrices dont les
lignes sont $a_1^\top$,\,\ldots,$\,a_p^\top$ et posons
$y=(\eta_k)_{1\leq k\leq p}$.
Alors \eqref{e:32} revient \`a r\'esoudre le syst\`eme lin\'eaire
$Ax=y$ et \eqref{e:33} \`a minimiser la fonction quadratique
$x\mapsto\|Ax-y\|^2$. Cette relaxation aux moindres carr\'es
d'un syst\`eme lin\'eaire remonte aux travaux de Legendre 
\cite{Lege05}.
\end{example}

\begin{example}
\label{ex:71}
Soit $\rho\in\RPP$. Dans le Corollaire~\ref{c:70}, supposons que
$f=0$ et que $(\forall k\in\{1,\ldots,p\})$ $\GG_k=\HH$, $L_k=\Id$,
$\rho_k=\rho$ et $\omega_k=1$. Ce choix d\'ebouche sur le
probl\`eme 
\begin{equation}
\label{e:71}
\minimize{x\in\HH}{\sum_{k=1}^p\brk1{\moyo{h_k}{\rho}}(x)},
\end{equation}
qui appara\^{\i}t en apprentissage f\'ed\'er\'e \cite{Path20}.
Dans ce cas, en prenant $\gamma_n\equiv\rho/p$ dans \eqref{e:a70}. 
on obtient la m\'ethode proximale barycentrique
\begin{equation}
\label{e:a71}
\begin{array}{l}
\text{pour}\;n=0,1,\ldots\\
\left\lfloor
\begin{array}{l}
x_{n+1}=\dfrac{1}{p}\displaystyle{\sum_{k=1}^p}\prox_{\rho h_k}x_n.
\end{array}
\right.\\
\end{array}
\end{equation}
\end{example}

\begin{example}
\label{ex:72}
Soient $\rho\in\RPP$ et $h\in\Gamma_0(\HH)$. 
Dans le Corollaire~\ref{c:70} supposons que
$p=1$, $\GG_1=\HH$, $L_1=\Id$, $h_1=h$, $\rho_1=\rho$ et
$\omega_1=1$. Alors l'objectif est de 
\begin{equation}
\label{e:72}
\minimize{x\in\HH}{f(x)+\brk1{\moyo{h}{\rho}}(x)}.
\end{equation}
Dans ce cas, en prenant $\gamma_n\equiv\rho$ dans \eqref{e:a70},
on obtient la m\'ethode proximale altern\'ee
\begin{equation}
\label{e:a72}
\begin{array}{l}
\text{pour}\;n=0,1,\ldots\\
\left\lfloor
\begin{array}{l}
x_{n+1}=\prox_{\rho f}\brk1{\prox_{\rho h}x_n}.
\end{array}
\right.\\
\end{array}
\end{equation}
En particulier, supposons que $\rho=1$, $f=\iota_C$ et $h=\iota_D$,
o\`u $C$ et $D$ sont des parties convexes ferm\'ees non vides de
$\HH$. Alors, en s'appuyant sur l'Exemple~\ref{ex:62}, on voit que
\eqref{e:72}
revient \`a minimiser sur $C$ la distance \`a $D$. Par ailleurs, 
au vu de l'Exemple~\ref{ex:10}, \eqref{e:a72} donne lieu \`a la
m\'ethode des projections altern\'ees \cite{Chen59}
\begin{equation}
\label{e:a72'}
\begin{array}{l}
\text{pour}\;n=0,1,\ldots\\
\left\lfloor
\begin{array}{l}
x_{n+1}=\proj_{C}\brk1{\proj_{D}x_n}.
\end{array}
\right.\\
\end{array}
\end{equation}
\end{example}

\begin{example}
\label{ex:73}
Soient $f\in\Gamma_0(\HH)$, $\ell\in\Gamma_0(\HH)$, $z\in\HH$
et $\rho\in\RPP$. Nous nous int\'eressons au probl\`eme bivari\'e
avec couplage quadratique
\begin{equation}
\label{e:bb4}
\minimize{x\in\HH, w\in\HH}{f(x)+\ell(w)+\dfrac{1}{2\rho}
\|x+w-z\|^2},
\end{equation}
que l'on retrouve dans divers travaux 
\cite{Acke80,Atto08,Aujo06,Smms05,Merc80}.
Posons $h\colon y\mapsto \ell(z-y)$, de sorte qu'en effectuant le
changement de variable $y=z-w$, \eqref{e:bb4} devient
\begin{equation}
\label{e:bB4}
\minimize{x\in\HH, y\in\HH}{f(x)+h(y)+\dfrac{1}{2\rho}
\|x-y\|^2}.
\end{equation}
Autrement dit, nous revenons au probl\`eme \eqref{e:72} par
rapport \`a la variable $x$. On applique l'algorithme 
\eqref{e:a72} en notant que
$\prox_{\rho h}\colon x\mapsto z-\prox_{\rho\ell}(z-x)$
\cite[Proposition~24.8]{Livre1}, ce qui nous donne 
\begin{equation}
\label{e:algobb4}
\begin{array}{l}
\text{pour}\;n=0,1,\ldots\\
\left\lfloor
\begin{array}{l}
x_{n+1}=\prox_{\rho f}\brk1{x_n-z+\prox_{\rho\ell}(z-x_n)}.
\end{array}
\right.\\
\end{array}
\end{equation}
On obtient alors la convergence de $(x_n)_{n\in\NN}$ vers un point
$x\in\HH$ tel que $(x,\prox_{\rho h}(z-x))$ r\'esout \eqref{e:bb4},
si ce dernier admet une solution. 
\end{example}

\subsection{Mod\`eles avec terme quadratique}
\label{sec:43}

Dans cette section, $y$ d\'esigne un vecteur dans $\GG$,
$L\colon\HH\to\GG$ un op\'erateur lin\'eaire non nul et
$L^*\colon\GG\to\HH$ son adjoint. Le terme lisse du
Probl\`eme~\ref{prob:1} est la fonctionnelle des moindres carr\'es
\begin{equation}
g\colon x\mapsto\dfrac{1}{2}\|Lx-y\|^2.
\end{equation}
On rappelle que $g$ est convexe et diff\'erentiable et que 
$\nabla g\colon x\mapsto L^*(Lx-y)$ est $\|L\|^2$-lipschitzien.
Dans ce cadre, le Th\'eor\`eme~\ref{t:1}\ref{t:1iii} nous donne
imm\'ediatement le r\'esultat suivant.

\begin{example}
\label{ex:51}
Soit $f\in\Gamma_0(\HH)$. L'objectif est de 
\begin{equation}
\label{e:p51'}
\minimize{x\in\HH}{f(x)+\dfrac{1}{2}\|Lx-y\|^2}
\end{equation}
sous l'hypoth\`ese qu'une solution existe. 
Ce type de probl\`eme se manifeste notamment en science des
donn\'ees, o\`u $y=L\overline{x}$ repr\'esente une observation
lin\'eaire (\'eventuellement bruit\'ee) du vecteur $\overline{x}$
\`a estimer et $f$ p\'enalise une propri\'et\'e connue de
$\overline{x}$ \cite{Smms05}. Soient 
\begin{equation}
\label{e:hyp}
x_0\in\HH,\;\varepsilon\in\left]0,\|L\|^{-2}\right[\;\text{et}\;
(\gamma_n)_{n\in\NN}\;\text{une suite dans}\;
\left[\varepsilon,2\|L\|^{-2}-\varepsilon\right].
\end{equation}
It\'erons
\begin{equation}
\label{e:51a}
\begin{array}{l}
\text{pour}\;n=0,1,\ldots\\
\left\lfloor
\begin{array}{l}
z_n=\gamma_nL^*(Lx_n-y)\\
x_{n+1}=\prox_{\gamma_n f}(x_n-z_n).
\end{array}
\right.\\
\end{array}
\end{equation}
Alors $(x_n)_{n\in\NN}$ converge vers une solution de
\eqref{e:p51'}.
\end{example}

Les Exemples~\ref{ex:52'}--\ref{ex:57} ci-dessous sont des cas 
particuliers de l'Exemple~\ref{ex:51}, dont nous adoptons les 
notations et les hypoth\`eses.

\begin{example}
\label{ex:52'}
Soient $(e_k)_{1\leq k\leq N}$ une base orthonormale de $\HH$
et $(\phi_k)_{1\leq k\leq N}$ des fonctions dans $\Gamma_0(\RR)$.
L'objectif est de 
\begin{equation}
\label{e:p52'}
\minimize{x\in\HH}{\sum_{k=1}^N\phi_k\bigl(\scal{x}{e_k}\bigr)+
\dfrac{1}{2}\|Lx-y\|^2}
\end{equation}
sous l'hypoth\`ese qu'une solution existe. 
Ici, $\phi_k$ p\'enalise le $k^{\text{e}}$ coefficient de la
d\'ecomposition de $\overline{x}$ dans la base orthonormale
$(e_k)_{1\leq k\leq N}$ \cite{MaPa18,Smms05,Daub04}. Sous
l'hypoth\`ese \eqref{e:hyp}, it\'erons
\begin{equation}
\label{e:52'}
\begin{array}{l}
\text{pour}\;n=0,1,\ldots\\
\left\lfloor
\begin{array}{l}
z_n=\gamma_nL^*(Lx_n-y)\\
x_{n+1}=\sum_{k=1}^N\bigl(\prox_{\gamma_n\phi_k}
\scal{x_n-z_n}{e_k}\bigr)e_k.
\end{array}
\right.\\
\end{array}
\end{equation}
Alors $(x_n)_{n\in\NN}$ converge vers une solution de
\eqref{e:p52'}.
\end{example}
\begin{proof}
Il s'agit d'une r\'ealisation de l'Exemple~\ref{ex:51} pour
laquelle $f\colon x\mapsto\sum_{k=1}^N\phi_k(\scal{x}{e_k})$.
En effet, $f\in\Gamma_0(\HH)$ et 
$\prox_{\gamma_n f}\colon x\mapsto\sum_{k=1}^N
(\prox_{\gamma_n\phi_k}\scal{x}{e_k})e_k$
\cite[Example~2.19]{Smms05}. 
\end{proof}

\begin{example}
\label{ex:53'}
Consid\'erons le cas particulier de l'Exemple~\ref{ex:52'} o\`u
$\HH=\RR^N$, $\GG=\RR^M$, $L$ est une matrice de taille 
$M\times N$, $(e_k)_{1\leq k\leq N}$ est la base canonique de
$\RR^N$ et $(\forall k\in\{1,\ldots,N\})$ $\phi_k=|\cdot|$. Ce
choix est destin\'e \`a promouvoir la parcimonie des solutions.
Alors \eqref{e:p52'} se r\'eduit au probl\`eme << Lasso >>
\cite{Chen01,Tibs96}
\begin{equation}
\label{e:57}
\minimize{x\in\HH}{\|x\|_1+\dfrac{1}{2}\|Lx-y\|^2}
\end{equation}
et \eqref{e:52'} \`a l'algorithme de seuillage doux it\'eratif
\cite{Smms05,Daub04}
\begin{equation}
\label{e:58}
\begin{array}{l}
\text{pour}\;n=0,1,\ldots\\
\left\lfloor
\begin{array}{l}
z_n=\gamma_nL^\top(Lx_n-y)\\
x_{n+1}=\bigl(\soft_{\gamma_n}([x_n-z_n]_1),
\ldots,\soft_{\gamma_n}([x_n-z_n]_N)\bigr),
\end{array}
\right.\\
\end{array}
\end{equation}
o\`u $[x_n-z_n]_k$ d\'esigne la $k^\text{e}$ composante du vecteur
$x_n-z_n$ et
\begin{equation}
\soft_{\gamma_n}\colon\xi\mapsto
\operatorname{signe}(\xi)\max\{|\xi|-\gamma_n,0\}.
\end{equation}
\end{example}

\begin{example}
\label{ex:59}
Consid\'erons la variante dite du << filet \'elastique >>
\cite{Demo09} de l'Exemple~\ref{ex:53'} qui consiste \`a ajouter un
terme quadratique, \`a savoir, 
\begin{equation}
\label{e:99}
\minimize{x\in\HH}{\|x\|_1+\dfrac{\beta}{2}\|x\|^2
+\dfrac{1}{2}\|Lx-y\|^2}.
\end{equation}
Dans ce cas, \eqref{e:52'} donne lieu \`a l'algorithme 
\begin{equation}
\label{e:99'}
\begin{array}{l}
\text{pour}\;n=0,1,\ldots\\
\left\lfloor
\begin{array}{l}
z_n=\gamma_nL^\top(Lx_n-y)\\
x_{n+1}=\brk3{
\soft_{\frac{\gamma_n}{1+\beta\gamma_n}}
\brk3{\dfrac{[x_n-z_n]_1}{1+\beta\gamma_n}},
\ldots,\soft_{\frac{\gamma_n}{1+\beta\gamma_n}}
\brk3{\dfrac{[x_n-z_n]_N}{1+\beta\gamma_n}}}.
\end{array}
\right.\\
\end{array}
\end{equation}
\end{example}

L'exemple suivant concerne la m\'ethode de Landweber projet\'ee
\cite{Eick92}, qui est une concr\'etisation de la m\'ethode du
gradient projet\'e du Corollaire~\ref{c:1}.

\begin{example}
\label{ex:58}
Soient $C$ une partie convexe ferm\'ee non vide de $\HH$.
L'objectif est de 
\begin{equation}
\label{e:p58'}
\minimize{x\in C}{\dfrac{1}{2}\|Lx-y\|^2}
\end{equation}
sous l'hypoth\`ese qu'une solution existe. 
Sous l'hypoth\`ese \eqref{e:hyp}, it\'erons
\begin{equation}
\label{e:58a}
\begin{array}{l}
\text{pour}\;n=0,1,\ldots\\
\left\lfloor
\begin{array}{l}
z_n=\gamma_nL^*(Lx_n-y)\\
x_{n+1}=\proj_{C}(x_n-z_n).
\end{array}
\right.\\
\end{array}
\end{equation}
Alors $(x_n)_{n\in\NN}$ converge vers une solution de
\eqref{e:p58'}.
\end{example}
\begin{proof}
Il s'agit de la r\'ealisation de l'Exemple~\ref{ex:51} pour
laquelle $f=\iota_C$. On d\'eduit \eqref{e:58a} de \eqref{e:51a} en
faisant appel \`a l'Exemple~\ref{ex:10}.
\end{proof}

Si $C$ est une partie convexe de $\HH$, alors $L(C)$ est une partie
convexe de $\GG$. Le but de d'exemple suivant est de calculer la
projection sur cet ensemble.

\begin{example}
\label{ex:67}
Reprenons le cadre de l'Exemple~\ref{ex:58} en ajoutant
l'hypoth\`ese que l'ensemble $L(C)$ est ferm\'e. 
Sous l'hypoth\`ese \eqref{e:hyp}, it\'erons
\begin{equation}
\label{e:a67}
\begin{array}{l}
\text{pour}\;n=0,1,\ldots\\
\left\lfloor
\begin{array}{l}
p_n=Lx_n\\
z_n=\gamma_nL^*(p_n-y)\\
x_{n+1}=\proj_C(x_n-z_n)
\end{array}
\right.\\
\end{array}
\end{equation}
Alors $p_n\to\proj_{L(C)}y$. 
\end{example}
\begin{proof}
On a vu dans l'Exemple~\ref{ex:58} que la suite $(x_n)_{n\in\NN}$
de \eqref{e:58a} converge vers une solution $x$ de \eqref{e:p58'}. 
Posons $p=Lx$, de sorte que $p=\proj_{L(C)}y$. Alors \eqref{e:58a} 
s'\'ecrit sous la forme \eqref{e:a67} et, en invoquant la
continuit\'e de $L$, on conclut que $p_n=Lx_n\to Lx=\proj_{L(C)}y$. 
\end{proof}

\begin{example}
\label{ex:57}
Soient $(C_i)_{1\leq i\leq m}$ des parties convexes ferm\'ees
non vides de $\GG$ dont la somme de Minkowski
\begin{equation}
S=\menge{x_1+\cdots+x_m}{x_1\in C_1,\ldots,x_m\in C_m}
\end{equation}
est ferm\'ee. Le probl\`eme de projeter un point $y\in\GG$ sur $S$
se manifeste dans de nombreuses applications 
\cite{Mart94,Wang20,Wonj19}. Pour le r\'esoudre, fixons des points
$(x_{i,0})_{1\leq i\leq m}$ dans $\GG$,
$\varepsilon\in\left]0,1/m\right[$,
et une suite $(\gamma_n)_{n\in\NN}$ dans
$\left[\varepsilon,2/m-\varepsilon\right]$.
It\'erons
\begin{equation}
\label{e:a57}
\begin{array}{l}
\text{pour}\;n=0,1,\ldots\\
\left\lfloor
\begin{array}{l}
p_n=\sum_{i=1}^mx_{i,n}\\
z_n=\gamma_n(p_n-y)\\
\text{pour}\;i=1,\ldots,m\\
\left\lfloor
\begin{array}{l}
x_{i,n+1}=\proj_{C_i}(x_{i,n}-z_n).
\end{array}
\right.\\
\end{array}
\right.\\
\end{array}
\end{equation}
Alors $p_n\to\proj_Sy$. 
\end{example}
\begin{proof}
Notons $\HHH$ l'espace produit $\GG^m$ muni de la
structure euclidienne usuelle et par 
$\boldsymbol{\mathsf{x}}=(x_1,\ldots,x_m)$ un point g\'en\'erique 
dans $\HHH$. Posons $\boldsymbol{\mathsf{L}}
\colon\HHH\to\GG\colon\boldsymbol{\mathsf{x}}
\mapsto x_1+\cdots+x_m$
et $\boldsymbol{\mathsf{C}}=C_1\times\cdots\times C_m$. Alors 
$\|\boldsymbol{\mathsf{L}}\|^2=m$, 
$\boldsymbol{\boldsymbol{L}}^*\colon\GG\to\HHH\colon 
x\mapsto(x,\ldots,x)$ et
$\proj_{\boldsymbol{\mathsf{C}}}\boldsymbol{\mathsf{x}}=
(\proj_{C_1}x_1,\ldots,\proj_{C_m}x_m)$
\cite[Proposition~29.3]{Livre1}. De plus,
$\boldsymbol{\mathsf{L}}(\boldsymbol{\mathsf{C}})=S$. Ainsi,
en appliquant \eqref{e:a67} dans $\HHH$ \`a 
$\boldsymbol{\mathsf{L}}$ et $\boldsymbol{\mathsf{C}}$ avec le
point initial $\boldsymbol{\mathsf{x}}_0=(x_{1,0},\ldots,x_{m,0})$
on obtient \eqref{e:a57} et on d\'eduit la convergence de 
$(p_n)_{n\in\NN}$ de l'Exemple~\ref{ex:67}.
\end{proof}

\begin{remark}
Pour simplifier l'exposition, l'Exemple~\ref{ex:51} ne fait
intervenir qu'un seul terme quadratique. On peut ais\'ement
l'\'etendre, ainsi que les Exemples~\ref{ex:52'}--\ref{ex:57} qui
en d\'ecoulent, au probl\`eme 
\begin{equation}
\label{e:p56}
\minimize{x\in\HH}{f(x)+
\dfrac{1}{2}\sum_{k=1}^p\omega_k\|L_kx-y_k\|^2},
\end{equation}
o\`u $y_k\in\GG_k$,
$L_k\colon\HH\to\GG_k$ est lin\'eaire et $\omega_k\in\RPP$. On
remplace alors \eqref{e:hyp} par
\begin{equation}
\label{e:hyp2}
x_0\in\HH,\;\varepsilon\in\left]0,\beta^{-1}\right[\;\text{et}\;
(\gamma_n)_{n\in\NN}\;\text{une suite dans}\;
\left[\varepsilon,2\beta^{-1}-\varepsilon\right],\;\text{o\`u}\;\;
\beta=\sum_{k=1}^p\omega_k\|L_k\|^2,
\end{equation}
et on it\`ere
\begin{equation}
\label{e:56a}
\begin{array}{l}
\text{pour}\;n=0,1,\ldots\\
\left\lfloor
\begin{array}{l}
z_n=\gamma_n\sum_{k=1}^p\omega_kL_k^*(L_kx_n-y_k)\\
x_{n+1}=\prox_{\gamma_n f}(x_n-z_n).
\end{array}
\right.\\
\end{array}
\end{equation}
\end{remark}

\section{Dualit\'e}
\label{sec:5}

Nous nous int\'eressons \`a un probl\`eme de minimisation
composite. 

\begin{problem}
\label{prob:3}
Soient $\varphi\in\Gamma_0(\HH)$, $\psi\in\Gamma_0(\GG)$, 
$z\in\HH$, $r\in\GG$ et $L\colon\HH\to\GG$ un op\'erateur
lin\'eaire non nul tel que 
$r\in\text{ir}\menge{Lx-y}{x\in\dom\varphi,\;y\in\dom\psi}$.
L'objectif est de 
\begin{equation}
\label{e:prob3}
\minimize{x\in\HH}{\varphi(x)+\psi(Lx-r)+\dfrac{1}{2}\|x-z\|^2}.
\end{equation}
On note $\overline{x}=\prox_{\varphi+\psi\circ(L\cdot-r)}z$ la 
solution unique de ce probl\`eme.
\end{problem}

Le Probl\`eme~\ref{prob:3} sort {\emph{a priori} du champ du
Probl\`eme~\ref{prob:1}. Nous allons cependant \^etre en
mesure de le r\'esoudre par une approche duale propos\'ee dans
\cite{Svva10}. Le principe de la dualit\'e est d'associer au
Probl\`eme~\ref{prob:3}, dit << primal >>, un probl\`eme dit 
<< dual >>
au sens de Fenchel--Rockafellar \cite[Section~15.3]{Livre1}. Ce
probl\`eme prend ici la forme \cite{Svva10}
\begin{equation}
\label{e:frd}
\minimize{v\in\GG}
{\moyo{(\varphi^*)}{1}(z-L^*v)+\psi^*(v)+\scal{v}{r}},
\end{equation}
o\`u 
\begin{equation}
\varphi^*\colon\HH\to\RX\colon
u\mapsto\sup_{x\in\HH}\brk1{\scal{x}{u}-\varphi(x)}
\end{equation}
est la fonction conjugu\'ee de $\varphi^*$ et 
$\moyo{(\varphi^*)}{1}$ son enveloppe de Moreau 
(D\'efinition}~\ref{d:MoYo}). L'id\'ee est d'appliquer la m\'ethode
du gradient proxim\'e au probl\`eme dual \eqref{e:frd} pour
construire la solution du Probl\`eme~\ref{prob:3}.

\begin{proposition}
\label{p:17}
Dans le contexte du Probl\`eme~\ref{prob:3}, fixons 
$v_0\in\dom\psi^*$, $\varepsilon\in\left]0,\|L\|^{-2}\right[$ et 
une suite $(\gamma_n)_{n\in\NN}$ dans
$\left[\varepsilon,2\|L\|^{-2}-\varepsilon\right]$.
It\'erons
\begin{equation}
\label{e:17a}
\begin{array}{l}
\text{pour}\;n=0,1,\ldots\\
\left\lfloor
\begin{array}{l}
x_n=\prox_\varphi(z-L^*v_n)\\
v_{n+1}=\prox_{\gamma_n\psi^*}\brk1{v_n+\gamma_n(Lx_n-r)}.
\end{array}
\right.\\[2mm]
\end{array}
\end{equation}
Alors les propri\'et\'es suivantes sont satisfaites:
\begin{enumerate}
\item
\label{p:17i}
$(v_n)_{n\in\NN}$ converge vers une solution $\overline{v}$ du
probl\`eme \eqref{e:frd} et
$\overline{x}=\prox_\varphi(z-L^*\overline{v})$.
\item
\label{p:17ii}
$(x_n)_{n\in\NN}$ converge vers la solution $\overline{x}$ du
Probl\`eme~\ref{prob:3}.
\end{enumerate}
\end{proposition}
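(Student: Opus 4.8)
L'approche consiste \`a reconna\^{\i}tre que l'algorithme \eqref{e:17a} est la m\'ethode du gradient proxim\'e \eqref{e:24} appliqu\'ee au probl\`eme dual \eqref{e:frd}, puis \`a transf\'erer la convergence des it\'er\'ees duales $(v_n)_{n\in\NN}$ aux it\'er\'ees primales $(x_n)_{n\in\NN}$. \`A cette fin, on pose $g\colon v\mapsto\moyo{(\varphi^*)}{1}(z-L^*v)$ et $f\colon v\mapsto\psi^*(v)+\scal{v}{r}$, de sorte que \eqref{e:frd} revienne \`a minimiser $f+g$ sur $\GG$ avec $f\in\Gamma_0(\GG)$.

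On v\'erifie d'abord que ce probl\`eme rel\`eve du Probl\`eme~\ref{prob:1}. D'apr\`es le Lemme~\ref{l:9}\ref{l:9i}, $\nabla\moyo{(\varphi^*)}{1}=\Id-\prox_{\varphi^*}$, d'o\`u, en d\'erivant cette compos\'ee, $\nabla g\colon v\mapsto-L\brk1{(\Id-\prox_{\varphi^*})(z-L^*v)}$. L'identit\'e de Moreau $\prox_{\varphi^*}=\Id-\prox_\varphi$ \cite{Livre1} r\'eduit cette expression \`a $\nabla g\colon v\mapsto-L\prox_\varphi(z-L^*v)$. Comme le Lemme~\ref{l:2} assure que $\Id-\prox_{\varphi^*}$ est $1$-lipschitzien, $\nabla g$ est $\|L\|^2$-lipschitzien, ce qui impose $\beta=\|L\|^2$ et fait co\"{\i}ncider exactement les plages de $\varepsilon$ et $(\gamma_n)_{n\in\NN}$ de la proposition avec celles du Th\'eor\`eme~\ref{t:1}. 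On identifie ensuite l'it\'eration : avec $x_n=\prox_\varphi(z-L^*v_n)$ on a $\nabla g(v_n)=-Lx_n$, donc le pas de gradient fournit $v_n-\gamma_n\nabla g(v_n)=v_n+\gamma_nLx_n$ ; par ailleurs, en compl\'etant le carr\'e, $\prox_{\gamma_nf}\colon u\mapsto\prox_{\gamma_n\psi^*}(u-\gamma_nr)$, de sorte que le pas proximal produit $\prox_{\gamma_nf}(v_n+\gamma_nLx_n)=\prox_{\gamma_n\psi^*}\brk1{v_n+\gamma_n(Lx_n-r)}=v_{n+1}$. Ainsi \eqref{e:17a} co\"{\i}ncide avec \eqref{e:24}.

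Le point d\'elicat est de garantir l'hypoth\`ese du Th\'eor\`eme~\ref{t:1}, \`a savoir que \eqref{e:frd} poss\`ede une solution. C'est pr\'ecis\'ement ici qu'intervient la condition de qualification $r\in\text{ir}\menge{Lx-y}{x\in\dom\varphi,\,y\in\dom\psi}$ : la th\'eorie de la dualit\'e de Fenchel--Rockafellar en d\'eduit l'absence de saut de dualit\'e et l'atteinte de la borne inf\'erieure du dual \cite{Svva10}. Cette existence acquise, le Th\'eor\`eme~\ref{t:1}\ref{t:1iii} livre une solution $\overline{v}$ de \eqref{e:frd} vers laquelle $(v_n)_{n\in\NN}$ converge.

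Il reste \`a \'etablir le lien primal-dual et \`a conclure. Posant $\overline{x}=\prox_\varphi(z-L^*\overline{v})$, l'optimalit\'e $0\in\nabla g(\overline{v})+\partial f(\overline{v})=-L\overline{x}+\partial\psi^*(\overline{v})+r$ \'equivaut \`a $\overline{v}\in\partial\psi(L\overline{x}-r)$, tandis que $\overline{x}=\prox_\varphi(z-L^*\overline{v})$ \'equivaut \`a $z-L^*\overline{v}-\overline{x}\in\partial\varphi(\overline{x})$. En combinant ces deux inclusions on obtient $0\in\partial\varphi(\overline{x})+L^*\partial\psi(L\overline{x}-r)+(\overline{x}-z)$, c'est-\`a-dire la condition d'optimalit\'e du Probl\`eme~\ref{prob:3} ; la stricte convexit\'e de l'objectif primal identifie alors $\overline{x}$ \`a son unique solution $\prox_{\varphi+\psi\circ(L\cdot-r)}z$, ce qui prouve \ref{p:17i}. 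Enfin, $\prox_\varphi$ \'etant $1$-lipschitzien (Lemme~\ref{l:2}) et $L^*$ continu, la convergence $v_n\to\overline{v}$ entra\^{\i}ne $x_n=\prox_\varphi(z-L^*v_n)\to\prox_\varphi(z-L^*\overline{v})=\overline{x}$, d'o\`u \ref{p:17ii}.
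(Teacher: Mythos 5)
Your proof is correct and takes essentially the same approach as the paper's: the same dual splitting $f\colon v\mapsto\psi^*(v)+\scal{v}{r}$, $g\colon v\mapsto\moyo{(\varphi^*)}{1}(z-L^*v)$, the identification of \eqref{e:17a} as an instance of \eqref{e:24} with $\beta=\|L\|^2$, the appeal to Th\'eor\`eme~\ref{t:1}\ref{t:1iii}, and the continuity of $\prox_\varphi$ and $L^*$ to get $x_n\to\overline{x}$. The paper's proof is only a sketch deferring details to \cite{Svva10}, and you correctly fill in precisely those details: the computation of $\nabla g$ via the Moreau identity, the dual attainment guaranteed by the qualification condition on $r$, and the subdifferential verification that $\overline{x}=\prox_\varphi(z-L^*\overline{v})$ is the unique primal solution.
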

\begin{proof}
Nous donnons seulement les \'etapes principales (\emph{cf.} 
\cite{Svva10} pour les d\'etails):
\begin{itemize}
\item
On ram\`ene \eqref{e:frd} au Probl\`eme~\ref{prob:1} en
posant $f\colon v\mapsto\psi^*(v)+\scal{v}{r}$ et 
$g\colon v\mapsto{\moyo{(\varphi^*)}{1}}(z-L^*v)$. 
\item
$f\in\Gamma_0(\GG)$ et $\prox_{\gamma f}\colon v\mapsto
\prox_{\gamma\psi^*}(v-\gamma r)$.
\item
On tire du Lemme~\ref{l:9}\ref{l:9i} que $g$ est convexe et 
diff\'erentiable sur $\GG$ et que son gradient
$\nabla g\colon v\mapsto -L(\prox_{\varphi}(z-L^*v))$
est $\beta$-lipschitzien avec $\beta=\|L\|^2$.
\item
L'algorithme \eqref{e:17a} est donc un cas particulier de 
l'algorithme \eqref{e:24}. \`A ce titre le 
Th\'eor\`eme~\ref{t:1}\ref{t:1iii}
garantit que la suite $(v_n)_{n\in\NN}$ converge vers une solution
$\overline{v}$ de \eqref{e:frd}.
\item
On montre que, si $\overline{v}$ r\'esout \eqref{e:frd}, 
alors $\overline{x}=\prox_{\varphi}(z-L^*\overline{v})$.
\item
Par continuit\'e de $\prox_\varphi$ (Lemme~\ref{l:2})
et de $L^*$, $v_n\to\overline{v}$
$\Rightarrow$ $x_n=\prox_\varphi(z-L^*v_n)\to
\prox_\varphi(z-L^*\overline{v})=\overline{x}$.
\end{itemize}
\end{proof}

\begin{example}
\label{ex:90}
Dans la Proposition~\ref{p:17}, supposons que $r=0$, 
$\varphi=\iota_C$ et $\psi=\iota_D$, o\`u $C\subset\HH$ et
$D\subset\GG$ sont des parties convexes ferm\'ees telles que 
$0\in\text{ir}\,\menge{Lx-y}{x\in C,\;y\in D}$.
Alors le Probl\`eme~\ref{prob:3} est le probl\`eme de meilleure
approximation 
\begin{equation}
\label{e:2010-07-22g}
\minimize{\substack{x\in C\\ Lx\in D}}{\dfrac{1}{2}\|x-z\|^2}
\end{equation}
et son dual \eqref{e:frd} est le probl\`eme
\begin{equation}
\label{e:2010-07-22h}
\minimize{v\in\GG}
{\dfrac{1}{2}\|z-L^*v\|^2-\dfrac{1}{2} d_C^2(z-L^*v)+\sigma_D(v)},
\end{equation}
o\`u $\sigma_D\colon v\mapsto\sup_{y\in D}\scal{y}{v}$ est la
fonction d'appui de $D$. De plus, \eqref{e:17a} devient
\begin{equation}
\begin{array}{l}
\text{pour}\;n=0,1,\ldots\\
\left\lfloor
\begin{array}{l}
x_n=\proj_C(z-L^*v_n),\\
v_{n+1}=v_n+\gamma_n
\brk1{Lx_n-\proj_D(\gamma_n^{-1}v_n+Lx_n)}.
\end{array}
\right.\\[2mm]
\end{array}
\end{equation}
Soit $\overline{x}$ l'unique solution de 
\eqref{e:2010-07-22g}, \emph{i.e.}, la projection de $z$ sur
$C\cap L^{-1}(D)$. Alors $(v_n)_{n\in\NN}$ converge vers une
solution $\overline{v}$ de \eqref{e:2010-07-22h},
$\overline{x}=\proj_C(z-L^*\overline{v})$ et $(x_n)_{n\in\NN}$
converge vers $\overline{x}$.
\end{example}

\begin{example}
\label{ex:91}
Dans la Proposition~\ref{p:17}, supposons que $\psi=\sigma_D$, o\`u
$D$ est une partie convexe, compacte et non vide de $\GG$.
Alors l'objectif du Probl\`eme~\ref{prob:3} est de
\begin{equation}
\label{e:prob41}
\minimize{x\in\HH}{\varphi(x)+\sigma_D(Lx-r)+\frac12\|x-z\|^2},
\end{equation}
celui du probl\`eme dual \eqref{e:frd} est de
\begin{equation}
\label{e:prob42}
\minimize{v\in D}{\moyo{(\varphi^*)}{1}(z-L^*v)+\scal{v}{r}},
\end{equation}
et l'algorithme \eqref{e:17a} se r\'eduit \`a
\begin{equation}
\label{e:main41}
\begin{array}{l}
\operatorname{pour}\;n=0,1,\ldots\\
\left\lfloor
\begin{array}{l}
x_n=\prox_\varphi(z-L^*v_n)\\[1mm]
v_{n+1}=\proj_D\brk1{v_n+\gamma_n(Lx_n-r)}.
\end{array}
\right.\\[2mm]
\end{array}
\end{equation}
Soit $\overline{x}$ l'unique solution de 
\eqref{e:prob41}. Alors $(v_n)_{n\in\NN}$ converge vers une
solution $\overline{v}$ de \eqref{e:prob42},
$\overline{x}=\prox_\varphi(z-L^*\overline{v})$ et 
$(x_n)_{n\in\NN}$ converge vers $\overline{x}$.
Notons que \eqref{e:prob41} englobe en particulier des 
probl\`emes en d\'ebruitage de signaux \cite{Aube06,Svva10} et en 
m\'ecanique \cite{Ekel99,Merc80}.
\end{example}

\section{Une version multivari\'ee}
\label{sec:6}

Certains probl\`emes d'optimisation font intervenir $m$
variables $x_1\in\HH_1$, \ldots, $x_m\in\HH_m$ interagissant entre
elles. Ces formulations se retrouvent par exemple en 
d\'ecomposition de domaine \cite{Atto08,Sico10}, 
en apprentissage automatique \cite{Bach12,McDo16}, 
en traitement du signal et de l'image \cite{Bric09,Chau13}, 
et dans les probl\`emes de flots \cite{Rock95}. 
Nous suivons l'approche de \cite{Sico10}.

\begin{problem}
\label{prob:62}
Soient $(\HH_i)_{1\leq i\leq m}$ et $(\GG_k)_{1\leq k\leq p}$ des
espaces euclidiens. Pour tout $i\in\{1,\ldots,m\}$, soit 
$f_i\in\Gamma_0(\HH_i)$ et, pour tout $k\in\{1,\ldots,p\}$, soient 
$\tau_k\in\RPP$, $h_k\colon\GG_k\to\RR$ une fonction convexe
et diff\'erentiable de gradient $\beta_k$-lipschitzien, et  
$L_{ki}\colon\HH_i\to\GG_k$ un op\'erateur lin\'eaire. 
On suppose que $\min_{1\leq k\leq p}\sum_{i=1}^m\|L_{ki}\|^2>0$. 
L'objectif est de 
\begin{equation}
\label{e:genna07-4}
\minimize{x_1\in\HH_1,\ldots,\,x_m\in\HH_m}{\sum_{i=1}^mf_i(x_i)
+\sum_{k=1}^ph_k\bigg(\sum_{i=1}^mL_{ki}x_i\bigg)}
\end{equation}
sous l'hypoth\`ese qu'une solution existe. 
\end{problem}

Dans \eqref{e:genna07-4}, le terme s\'eparable 
$\sum_{i=1}^mf_i(x_i)$ p\'enalise les composantes 
$(x_i)_{1\leq i\leq m}$ individuellement, tandis que le terme 
$\sum_{k=1}^ph_k(\sum_{i=1}^mL_{ki}x_i)$ p\'enalise $p$ couplages
entre ces variables mod\'elisant les interactions. 

La strat\'egie est de se ramener au Probl\`eme~\ref{prob:1} en
posant
\begin{equation}
\begin{cases}
\HH=\HH_1\oplus\cdots\oplus\HH_m\\
f\colon\HH\to\RX\colon(x_1,\ldots,x_m)
\mapsto\displaystyle\sum_{i=1}^mf_i(x_i)\\
g\colon\HH\to\RR\colon(x_1,\ldots,x_m)\mapsto
\displaystyle\sum_{k=1}^ph_k
\brk3{\displaystyle\sum_{i=1}^mL_{ki}x_i}.
\end{cases}
\end{equation}
On note en effet que :
\begin{itemize}
\item
$f\in\Gamma_0(\HH)$ et, pour tout $\gamma\in\RPP$,
\begin{equation}
\prox_{\gamma f}\colon(x_1,\ldots,x_m)\mapsto
\brk2{\prox_{\gamma f_1}x_1,\ldots,\prox_{\gamma f_m}x_m}.
\end{equation}
\item
$g$ est convexe et lisse, et son gradient
\begin{equation}
\nabla g\colon(x_1,\ldots,x_m)\mapsto
\brk4{\sum_{k=1}^pL_{k1}^*\brk3{\nabla h_k
\brk3{\sum_{j=1}^mL_{kj}x_j}},
\ldots,
\sum_{k=1}^pL_{km}^*\brk3{\nabla h_k
\brk3{\sum_{j=1}^mL_{kj}x_j}}}
\end{equation}
a constante de Lipschitz
\begin{equation}
\label{e:97}
\beta={p\:\displaystyle{\max_{1\leq k\leq p}}
\:\tau_k\sum_{i=1}^m\|L_{ki}\|^2}.
\end{equation}
\end{itemize}
On d\'eduit alors du Th\'eor\`eme~\ref{t:1}\ref{t:1iii} le
r\'esultat suivant.

\begin{proposition}
\label{p:99}
Dans le contexte du Probl\`eme~\ref{prob:62} et de \eqref{e:97}, 
fixons $x_{1,0}\in\dom f_1, \ldots, x_{m,0}\in\dom f_m$,
$\varepsilon\in\left]0,\beta^{-1}\right[$ et une suite
$(\gamma_n)_{n\in\NN}$ dans
$\left[\varepsilon,2\beta^{-1}-\varepsilon\right]$.
On it\`ere 
\begin{equation}
\label{e:98}
\begin{array}{l}
\text{pour}\;n=0,1,\ldots\\
\left\lfloor
\begin{array}{l}
\text{pour}\;i=1,\ldots,m\\
\begin{array}{l}
\left\lfloor
\begin{array}{l}
y_{i,n}=x_{i,n}-\gamma_{n}\displaystyle{\sum_{k=1}^p}
L_{ki}^*\brk4{\nabla h_k\brk3{\sum_{j=1}^mL_{kj}x_{j,n}}}\\
x_{i,n+1}=\prox_{\gamma_n f_i}y_{i,n}.
\end{array}
\right.
\end{array}
\end{array}
\right.\\[2mm]
\end{array}
\end{equation}
Alors, pour tout $i\in\{1,\ldots,m\}$, la suite 
$(x_{i,n})_{n\in\NN}$ converge vers un point $x_i\in\HH_i$, et
$(x_1,\ldots,x_m)$ r\'esout le Probl\`eme~\ref{prob:62}.
\end{proposition}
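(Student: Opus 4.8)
The plan is to recognize problem~\eqref{e:genna07-4} as a single instance of Problem~\ref{prob:1} posed on the direct sum $\HH=\HH_1\oplus\cdots\oplus\HH_m$, with $f$ and $g$ the two functions introduced just above the statement, and then to read the iteration~\eqref{e:98} as nothing but the proximal gradient recursion~\eqref{e:24} written block by block. The three preliminary facts I would rely on are already recorded in the displays preceding the statement: $f\in\Gamma_0(\HH)$ with $\prox_{\gamma f}$ acting separately on each coordinate, and $g\colon\HH\to\RR$ convex and differentiable with the gradient displayed above and with the Lipschitz constant $\beta$ of~\eqref{e:97}. The bulk of the proof is then checking that these facts force~\eqref{e:98} to coincide with~\eqref{e:24}, after which Theorem~\ref{t:1}\ref{t:1iii} finishes the job.

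First I would confirm the reduction. For $f$, the domain is the product $\dom f_1\times\cdots\times\dom f_m$, which is nonempty since each $\dom f_i$ is, while lower semicontinuity and convexity are inherited by a finite separable sum; hence $f\in\Gamma_0(\HH)$. Because $f$ is separable, the minimization defining $\prox_{\gamma_n f}y_n$ decouples across the blocks, so its $i$-th component is exactly $\prox_{\gamma_n f_i}y_{i,n}$, which is the last line of~\eqref{e:98}. For $g$, convexity and differentiability follow by composing each convex differentiable $h_k$ with the linear map $x\mapsto\sum_i L_{ki}x_i$ and summing over $k$; the chain rule, together with the fact that the adjoint of this map sends $y$ to the tuple with $i$-th entry $L_{ki}^*y$, gives $\nabla g$ as the sum over $k$ of these contributions, whose $i$-th component is $\sum_k L_{ki}^*\nabla h_k(\sum_j L_{kj}x_j)$. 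Thus the first line of~\eqref{e:98} is the coordinate form of the gradient step $y_n=x_n-\gamma_n\nabla g(x_n)$.

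Next I would line up the hypotheses of Theorem~\ref{t:1}. The standing assumption $\min_{1\le k\le p}\sum_{i=1}^m\|L_{ki}\|^2>0$ together with $\tau_k>0$ guarantees $\beta>0$, so the interval $\left]0,\beta^{-1}\right[$ for $\varepsilon$ and the range $\left[\varepsilon,2\beta^{-1}-\varepsilon\right]$ for $(\gamma_n)_{n\in\NN}$ are precisely those of~\eqref{e:24}, while $(x_{i,0})_{1\le i\le m}\in\dom f_1\times\cdots\times\dom f_m=\dom f$ supplies an admissible starting point. Minimizing $f+g$ over $\HH$ is Problem~\ref{prob:1}, whose solution set is nonempty by hypothesis, so Theorem~\ref{t:1}\ref{t:1iii} applies and the tuple $(x_{1,n},\ldots,x_{m,n})$ converges to some minimizer $(x_1,\ldots,x_m)$ of $f+g$. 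Since $\HH$ is a finite direct sum, the squared norm splits as $\sum_i\|\cdot\|^2$, so convergence of the tuple is equivalent to the coordinatewise convergences $x_{i,n}\to x_i$; and the minimizers of $f+g$ are exactly the solutions of~\eqref{e:genna07-4}, which closes the argument.

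The only genuinely computational point is the Lipschitz constant~\eqref{e:97}: it is the sole place where one must quantitatively combine the $\beta_k$-Lipschitz continuity of each $\nabla h_k$ with the operator norms $\|L_{ki}\|$ across the $p$ coupling blocks, and it is what pins down the admissible step sizes. I expect this estimate, rather than the structural reduction (which is pure bookkeeping), to be the main obstacle; but since it has already been established in the display preceding the statement, in the proof I would simply cite~\eqref{e:97} and devote the remaining effort to the block-by-block identification above.
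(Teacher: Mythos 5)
Your proposal is correct and takes essentially the same route as the paper: the paper likewise poses \eqref{e:genna07-4} as an instance of the Probl\`eme~\ref{prob:1} on $\HH=\HH_1\oplus\cdots\oplus\HH_m$ with the separable $f$ and the smooth coupling $g$, records the componentwise proximity operator, the gradient formula and the Lipschitz constant \eqref{e:97}, and then simply invokes the Th\'eor\`eme~\ref{t:1}\ref{t:1iii}. Your extra verifications (properness and lower semicontinuity of the separable sum, decoupling of the prox, equivalence of convergence in the product space with coordinatewise convergence) only spell out what the paper treats as immediate.
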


On trouvera dans \cite{Bric09} des applications de la
Proposition~\ref{p:99} au traitement du signal, dont celle-ci.

\begin{example} 
\label{ex:95} 
Consid\'erons, dans le cadre du Probl\`eme~\ref{prob:62}, celui de
reconstruire un signal multi-canaux $(x_1,\ldots,x_m)$ \`a partir
de $p$ observations impr\'ecises 
\begin{equation} 
(\forall k\in\{1,\ldots,p\})\quad y_k\approx\sum_{i=1}^mL_{ki}x_i
\end{equation} 
de m\'elanges lin\'eaires, sous la contrainte que chaque composante
$x_i$ appartienne \`a un convexe ferm\'e non vide $C_i$ de
$\HH_i$. On mod\'elise ce probl\`eme sous la forme 
\begin{equation}
\label{e:genna07-5} 
\minimize{x_1\in C_1,\ldots,\,x_m\in C_m}{\dfrac{1}{2}
\sum_{k=1}^p\Biggl\|y_k-\sum_{i=1}^mL_{ki}x_i\Biggr\|^2},
\end{equation} 
ce qui revient \`a poser $f_i=\iota_{C_i}$ et
$h_k\colon v_k\mapsto\|y_k-v_k\|^2/2$ dans le
Probl\`eme~\ref{prob:62}. L'algorithme \eqref{e:98} s'\'ecrit alors
\begin{equation}
\label{e:95}
\begin{array}{l}
\text{pour}\;n=0,1,\ldots\\
\left\lfloor
\begin{array}{l}
\text{pour}\;i=1,\ldots,m\\
\begin{array}{l}
\left\lfloor
\begin{array}{l}
y_{i,n}=x_{i,n}+\gamma_{n}\displaystyle{\sum_{k=1}^p}
L_{ki}^*\brk3{y_k-\sum_{j=1}^mL_{kj}x_{j,n}}\\
x_{i,n+1}=\proj_{C_i}y_{i,n}
\end{array}
\right.
\end{array}
\end{array}
\right.
\end{array}
\end{equation}
et sa convergence vers une solution de \eqref{e:genna07-5} est
garantie par la Proposition~\ref{p:99}. Notons que dans le
sc\'enario pr\'esent on peut affiner \eqref{e:97} en prenant
$\beta=\sum_{k=1}^p\sum_{i=1}^m\|L_{ki}\|^2$.
\end{example}

\section{Conclusion}
\label{sec:7}

L'objet principal de cette synth\`ese a \'et\'e de montrer que,
malgr\'e son formalisme simple, le Probl\`eme~\ref{prob:1}
mod\'elise une grande vari\'et\'e de probl\`emes concrets et qu'il
peut \^etre r\'esolu par un algorithme alternant un pas de gradient
sur sa fonction lisse et un pas proximal sur sa fonction non
diff\'erentiable~: la m\'ethode du gradient proxim\'e. Les
propri\'et\'es asymptotiques de cet algorithme ont \'et\'e
\'etudi\'ees et plusieurs de ses applications ont \'et\'e
d\'ecrites. Enfin, nous avons vu que, par le biais de
reformulations duales ou dans des espaces produits, la port\'ee de
la m\'ethode du gradient proxim\'e peut \^etre \'etendue \`a des
probl\`emes d'optimisation qui se situent au del\`a du
cadre initial du Probl\`eme~\ref{prob:1}. 

\bigskip

\noindent
{\bfseries Remerciements.} L'auteur remercie Minh N. B\`ui, 
Diego J. Cornejo et Julien N. Mayrand pour leurs relectures 
attentives.


\begin{thebibliography}{99}
\setlength{\itemsep}{0pt}

\bibitem{Acke80} 
F. Acker and M. A. Prestel, 
Convergence d'un sch\'ema de minimisation altern\'ee,
{\em Ann. Fac. Sci. Toulouse V. S\'er. Math.},
vol. 2, pp. 1--9, 1980. 

\bibitem{Atto08}
H. Attouch, J. Bolte, P. Redont, and A. Soubeyran,
Alternating proximal algorithms for weakly coupled convex 
minimization problems. Applications to dynamical games and PDE's,
{\em J. Convex Anal.},
vol. 15, pp. 485--506, 2008.

\bibitem{Sico10}
H. Attouch, L. M. Brice\~no-Arias, and P. L. Combettes, 
A parallel splitting method for coupled monotone inclusions,
{\em SIAM J. Control Optim.},
vol. 48, pp. 3246--3270, 2010.

\bibitem{Atto18}
H. Attouch and A. Cabot, 
Convergence rates of inertial forward-backward algorithms,
{\em SIAM J. Optim.},
vol. 28, pp. 849--874, 2018.

\bibitem{Aube06} 
G. Aubert and P. Kornprobst, 
{\em Mathematical Problems in Image Processing,}
2nd ed. Springer, New York, 2006.

\bibitem{Aujo06}
J.-F. Aujol, G. Gilboa, T. Chan, and S. Osher,
Structure-texture image decomposition -- modeling, algorithms, and
parameter selection.
{\em Int. J. Comput. Vision},
vol. 67, pp. 111--136, 2006.

\bibitem{Bach12}
F. Bach, R. Jenatton, J. Mairal, and G. Obozinski, 
Optimization with sparsity-inducing penalties, 
{\em Found. Trends Machine Learn.},
vol. 4, pp. 1--106, 2012.

\bibitem{Bail77} 
J.-B. Baillon and G. Haddad,
Quelques propri\'et\'es des op\'erateurs angle-born\'es et 
$n$-cycliquement monotones,
{\em Israel J. Math.},
vol. 26, pp. 137--150, 1977.

\bibitem{Livre1} 
H. H. Bauschke and P. L. Combettes, 
{\em Convex Analysis and Monotone Operator Theory in Hilbert 
Spaces}, 2nd ed. 
Springer, New York, 2017.

\bibitem{Bec09a}
A. Beck and M. Teboulle,
A fast iterative shrinkage-thresholding algorithm for linear 
inverse problems,
{\em SIAM J. Imaging Sci.},
vol. 2, pp. 183--202, 2009.

\bibitem{Bric09}
L. M. Brice\~no--Arias and P. L. Combettes,
Convex variational formulation with smooth coupling for 
multicomponent signal decomposition and recovery, 
{\em Numer. Math. Theory Methods Appl.},
vol. 2, pp. 485--508, 2009.

\bibitem{Cham15}
A. Chambolle and C. Dossal,
On the convergence of the iterates of the ``fast iterative 
shrinkage/thresholding algorithm,''
{\em J. Optim. Theory Appl.},
vol. 166, pp. 968--982, 2015.

\bibitem{Chau13} 
C. Chaux, M. El-Gheche, J. Farah, J.-C. Pesquet, and 
B. Pesquet-Popescu, 
A parallel proximal splitting method for disparity estimation 
from multicomponent images under illumination variation, 
{\em J. Math. Imaging Vision}, 
vol. 47, pp. 167--178, 2013.

\bibitem{Chen01} 
S. Chen, D. Donoho, and M. Saunders,
Atomic decomposition by basis pursuit,
{\em SIAM Rev.},
vol. 43, pp. 129--159, 2001.

\bibitem{Chen59} 
W. Cheney and A. A. Goldstein, 
Proximity maps for convex sets,
{\em Proc. Amer. Math. Soc.},
vol. 10, pp. 448--450, 1959.

\bibitem{Acnu24} 
P. L. Combettes,
The geometry of monotone operator splitting methods,
{\em Acta Numer.},
vol. 33, pp. 487--632, 2024.

\bibitem{Svva10}
P. L. Combettes, \DJ inh D\~ung, and B. C. V\~u,
Dualization of signal recovery problems,
{\em Set-Valued Var. Anal.},
vol. 18, pp. 373--404, 2010.

\bibitem{Siop17}
P. L. Combettes and L. E. Glaudin,
Quasinonexpansive iterations on the affine hull of orbits: From 
Mann's mean value algorithm to inertial methods,
{\em SIAM J. Optim.}, 
vol. 27, pp. 2356--2380, 2017.

\bibitem{MaPa18}
P. L. Combettes, S. Salzo, and S. Villa, 
Consistent learning by composite proximal thresholding,
{\em Math. Program.},
vol. B167, pp. 99--127, 2018. 

\bibitem{Smms05}
P. L. Combettes and V. R. Wajs, 
Signal recovery by proximal forward-backward splitting,
{\em Multiscale Model. Simul.}, 
vol. 4, pp. 1168--1200, 2005.

\bibitem{Daub04} 
I. Daubechies, M. Defrise, and C. De Mol,
An iterative thresholding algorithm for linear inverse problems
with a sparsity constraint,
{\em Comm. Pure Appl. Math.},  
vol. 57, pp. 1413--1457, 2004.

\bibitem{Demo09}
C. De Mol, E. De Vito, and L. Rosasco, 
Elastic-net regularization in learning theory, 
{\em J. Complexity},
vol. 25, pp. 201--230, 2009. 

\bibitem{Eick92} 
B. Eicke,
Iteration methods for convexly constrained ill-posed problems in 
Hilbert space,
{\em Numer. Funct. Anal. Optim.},
vol. 13, pp. 413--429, 1992.

\bibitem{Ekel99} 
I. Ekeland and R. Temam,
{\em Analyse Convexe et Probl\`emes Variationnels}.
Dunod, Paris, 1974.

\bibitem{Lege05} 
A. M. Legendre,
{\em Nouvelles M\'ethodes pour la D\'etermination des Orbites des
Com\`etes.}
Firmin Didot, Paris, 1805.

\bibitem{Lion70} 
J.-L. Lions and R. Temam, 
\'Eclatement et d\'ecentralisation en calcul des variations,
{\em Lecture Notes in Math.},
vol. 132, pp. 196--217, 1970.

\bibitem{McDo16}
A. M. McDonald, M. Pontil, and D. Stamos, 
New perspectives on $k$-support and cluster norms,
{\em J. Machine Learn. Res.}, 
vol. 17, pp. 1--38, 2016.

\bibitem{Mart94} 
J. E. Mart\'{i}nez-Legaz and A. Seeger,
A general cone decomposition theory based on efficiency,
{\em Math. Program.},
vol. 65, pp. 1--20, 1994.

\bibitem{Merc80} 
B. Mercier,
{\em In\'equations Variationnelles de la M\'ecanique}
(Publications Math\'ematiques d'Orsay, no. 80.01).
Universit\'e de Paris-XI, Orsay, France, 1980. 

\bibitem{Mor62b} 
J. J. Moreau, 
Fonctions convexes duales et points proximaux dans un espace 
hilbertien,
{\em C. R. Acad. Sci. Paris S\'er. A Math.},
vol. 255, pp. 2897--2899, 1962.

\bibitem{Mor63a} 
J. J. Moreau, 
Propri\'et\'es des applications << prox >>,
{\em C. R. Acad. Sci. Paris},
vol. A256, pp. 1069--1071, 1963.

\bibitem{More65} 
J. J. Moreau,
Proximit\'e et dualit\'e dans un espace hilbertien,
{\em Bull. Soc. Math. France},
vol. 93, pp. 273--299, 1965.

\bibitem{Path20}
R. Pathak and M. J. Wainwright, 
FedSplit: An algorithmic framework for fast federated optimization,
{\em Proc. Adv. Neural Inform. Process. Syst. Conf.}, 
vol. 33. pp. 7057--7066, 2020.

\bibitem{Rock95}
R. T. Rockafellar,
Monotone relations and network equilibrium, in:
{\em Variational Inequalities and Network Equilibrium Problems},
(F. Giannessi and A. Maugeri, eds.), pp. 271--288.
Plenum Press, New York, 1995.

\bibitem{Tibs96}
R. Tibshirani,
Regression shrinkage and selection via the lasso,
{\em J. R. Stat. Soc. Ser. B Stat. Methodol.},
vol. 58, pp. 267--288, 1996.

\bibitem{Tsen91} 
P. Tseng,
Applications of a splitting algorithm to decomposition in convex 
programming and variational inequalities.
{\em SIAM J. Control Optim.},
vol. 29, pp. 119--138, 1991.

\bibitem{Wang20} 
X. Wang, J. Zhang, and W. Zhang,
The distance between convex sets with Minkowski sum structure:
Application to collision detection,
{\em Comput. Optim. Appl.},
vol. 77, pp. 465--490, 2020.

\bibitem{Wonj19} 
J.-H. Won, J. Xu, and K. Lange,
Projection onto Minkowski sums with application to constrained
learning,
{\em Proc. 36th Int. Conf. Machine Learn.},
pp. 3642--3651, 2019.







\end{thebibliography}
\end{document}